\newtheorem{theorem}{Theorem}[section]
\newtheorem{lemma}{Lemma}[section]
\theoremstyle{definition}
\newtheorem{definition}{Definition}[section]
\newtheorem{remark}{Remark}[section]
\numberwithin{equation}{section}
\theoremstyle{example}
\numberwithin{equation}{section}
\begin{document}

\title{Four-operator splitting algorithms for solving monotone inclusions}

\author{Jinjian Chen$^{a}$, Yuchao Tang$^{a}$
\\
%EndAName
{\small ${^a}$ Department of Mathematics, Nanchang University,  Nanchang 330031, P.R. China }}

 \date{}

\maketitle

{}\textbf{Abstract.}\ Monotone inclusions involving the sum of three maximally monotone operators or more have received much attention in recent years.
In this paper, we propose three splitting algorithms for finding a zero of the sum of four monotone operators, which are two maximally monotone operators, one monotone Lipschitz operator, and one cocoercive operator. These three splitting algorithms are based on the forward-reflected-Douglas-Rachford splitting algorithm, backward-forward-reflected-backward splitting algorithm, and backward-reflected-forward-backward splitting algorithm, respectively. As applications, we apply the proposed algorithms to solve the monotone inclusions problem involving a finite sum of maximally monotone operators. Numerical results on the Projection on Minkowski sums of convex sets demonstrate the effectiveness of the proposed algorithms.

\textbf{Key words}: Maximally monotone operators; Lipschitz operator; Cocoercive operator; Operator splitting algorithms.

\textbf{MSC2000}: 47H05; 47J25; 65K05;

%%%%%%%%%%%%%%%%%%%%%%%%%%%%%%%%%%%%%%%%%%%%%%%%%%%%%%%%%%%%%%%%%%%%%%%%%%%%%%%%%%%%%%%%%%%%%%%%%%%%%%%%%%%%
\section{Introduction}

Monotone inclusions play an important role in studying various convex minimization problems, which arise in signal and image processing, medical image reconstruction and machine learning, etc. The traditional operator splitting algorithms are used to solve monotone inclusions involving the sum of two maximally monotone operators, where one of them is cocoercive or monotone Lipschitz. The most popular operator splitting algorithms include the Douglas-Rachford splitting algorithm \cite{lionsandmercier1979,Eckstein1992}, the forward-backward splitting algorithm \cite{lionsandmercier1979}, and the forward-backward-forward splitting algorithm \cite{Tseng2000SIAM}. These operator splitting algorithms and their variants have been extensively studied. See, e.g., \cite{Combettes2004Optimization,combettes2007,Combettes2014Optimization,Bonettini2016SIAMJSC,Eckstein2017MP,Attouch2018,CuiJIA2019,Attouch2019AMO} and references therein. To deal with monotone inclusions of the sum of three maximally monotone operators or more, several new operator splitting algorithms have been proposed. See, for example \cite{Raguet-SIAM-2013,Banert2012,davis2015,Raduet2015SIAMJIS,briceno2015Optim,Briceno2015JOTA,Arias2017A,Malitsky2020SIAMJO,Cevher2020SVVA,Ryu2020JOTA,Rieger2020AMC,Yu-tang-2020,Chenjj2021}.

Let $\mathcal{H}$ be a real Hilbert space and $m\geq 1$ be an integer. For each $i\in \{1,\cdots,m\}$, let $A_{i}:\mathcal{H}\rightarrow 2^\mathcal{H}$ be maximally monotone. Let $A:\mathcal{H}\rightarrow 2^\mathcal{H}$ be maximally monotone, $B:\mathcal{H}\rightarrow \mathcal{H}$ be monotone and $L$-Lipschitz with $L>0$, and $C:\mathcal{H}\rightarrow \mathcal{H}$ be $\beta$-cocoercive, for some $\beta>0$. Raguet et al. \cite{Raguet-SIAM-2013} first proposed a generalized forward-backward splitting algorithm for solving the following monotone inclusions problem:
\begin{equation}\label{problem1-m-maximally}
\textrm{find}\quad x\in \mathcal{H}\quad\textrm{such that}\quad 0\in \sum_{i=1}^{m}A_{i}x+Cx.
\end{equation}
In particular, the generalized forward-backward splitting algorithm reduces to the classical forward-backward splitting algorithm when $m=1$. When the cocoercive operator $C$ is replaced by a monotone Lipschitz operator $B$, Banert \cite{Banert2012} considered the following monotone inclusions problem£º
\begin{equation}\label{problem2-m-maximally}
\textrm{find}\quad x\in \mathcal{H}\quad\textrm{such that}\quad 0\in \sum_{i=1}^{m}A_{i}x+Bx.
\end{equation}
As a consequence, Banert \cite{Banert2012} proposed a relaxed forward-backward splitting algorithm. On the other hand, Davis and Yin \cite{davis2015} introduced a so-called three-operator splitting algorithm for solving the following monotone inclusions
\begin{equation}\label{problem1-three-operator}
\textrm{find}\quad x\in \mathcal{H}\quad\textrm{such that}\quad 0\in A_{1}x+A_{2}x+Cx.
\end{equation}
Brice\~no-Arias \cite{briceno2015Optim} studied the three-operator monotone inclusions (\ref{problem1-three-operator}), where one of the maximally monotone operators is the normal cone of a closed vector subspace. It is worth mentioning that the generalized forward-backward splitting algorithm \cite{Raguet-SIAM-2013} could be recovered by both of the operator splitting algorithms proposed by \cite{davis2015} and \cite{briceno2015Optim}.

In recent years, many authors studied the following three-operator monotone inclusions problem.
\begin{equation}\label{problem2-three-operator}
\textrm{find}\quad x\in H\quad\textrm{such that}\quad 0\in Ax+Bx+Cx.
\end{equation}
In particular, Brice\~no-Arias and Davis \cite{Arias2017A} first proposed a so-called forward-backward-half-forward splitting algorithm, which combined the classical forward-backward splitting algorithm  and Tseng's forward-backward-forward splitting algorithm. Recently, a semi-forward-reflected-backward splitting algorithm has been proposed by Malitsky and Tam  \cite{Malitsky2020SIAMJO}, and a semi-reflected-forward-backward splitting algorithm has been proposed by Cevher and V\~u \cite{Cevher-2019-arxiv,Cevher2020SVVA}. In addition, Yu et al. \cite{Yu-tang-2020} introduced an outer reflected forward-backward splitting algorithm to solve this problem as well.

If the cocoercive operator of (\ref{problem1-three-operator}) is relaxed to monotone Lipschitz operator, then the problem  (\ref{problem1-three-operator}) is reformulated as the following monotone inclusions:
\begin{equation}\label{problem3-three-operator}
\textrm{find}\quad x\in H\quad\textrm{such that}\quad 0\in A_{1}x+A_{2}x+Bx.
\end{equation}
Ryu and V\~u \cite{Ryu2020JOTA} proposed a so-called forward-reflected-Douglas-Rachford splitting algorithm, which is defined by,
\begin{equation}\label{algorithm1-three-operator}
\left\{
\begin{aligned}
& x_{n+1}=J_{\gamma A_{2}}(x_{n}-\gamma u_{n}-\gamma(2Bx_{n}-Bx_{n-1}))\\
& y_{n+1}=J_{\lambda A_{1}}(2x_{n+1}-x_{n}+\lambda u_{n})\\
& u_{n+1}=u_{n}+\frac{1}{\lambda}(2x_{n+1}-x_{n}-y_{n+1}),
\end{aligned}
\right.
\end{equation}
where $\lambda >0$ and $\gamma\in (0,\frac{\beta}{1+2\beta L})$. Besides, Rieger and Tam \cite{Rieger2020AMC} proposed two splitting algorithms to solve the problem (\ref{problem3-three-operator}). One is the Backward-forward-reflected-backward splitting algorithm, which combines the Forward-reflected-backward splitting algorithm and the Douglas-Rachford splitting algorithm. The iterative scheme is given by
\begin{equation}\label{algorithm2-three-operator}
\left\{
\begin{aligned}
& x_{n+1}=J_{\gamma A_{1}}z_{n}\\
& y_{n+1}=J_{\gamma A_{2}}(2x_{n+1}-z_{n}-2\gamma By_{n}+\gamma By_{n-1})\\
& z_{n+1}=z_{n}+y_{n+1}-x_{n+1},
\end{aligned}
\right.
\end{equation}
where $\gamma\in (0,\frac{1}{8L})$. The other is called a Backward-reflected-forward-backward splitting algorithm, which combines the Reflected-forward-backward splitting algorithm and the Douglas-Rachford splitting algorithm. The iterative scheme is given by
\begin{equation}\label{algorithm3-three-operator}
\left\{
\begin{aligned}
& x_{n+1}=J_{\gamma A_{1}}z_{n}\\
& y_{n+1}=J_{\gamma A_{2}}(2x_{n+1}-z_{n}-\gamma B(2y_{n}-y_{n-1}))\\
& z_{n+1}=z_{n}+y_{n+1}-x_{n+1},
\end{aligned}
\right.
\end{equation}
where $\gamma\in (0,\frac{1}{22L})$. In summary, we summarize existing algorithms for solving monotone inclusions containing three-operator and beyond in Table \ref{Table-1}.

\begin{table}[h]
\centering
\caption{ Operator splitting algorithms for solving three-operator monotone inclusions and beyond. }
\begin{tabular}{c|c}
\hline
Monotone inclusions & Operator splitting algorithms  \\
\hline
$0\in \sum_{i=1}^{m}A_{i}x+Cx$ (\ref{problem1-m-maximally})& Generalized forward-backward splitting algorithm \cite{Raguet-SIAM-2013}\\
\hline
$0\in \sum_{i=1}^{m}A_{i}x+Bx$ (\ref{problem2-m-maximally})& Relaxed forward-backward splitting algorithm \cite{Banert2012}\\
\hline
$0\in A_{1}x+A_{2}x+Cx$ (\ref{problem1-three-operator}) & Three-operator splitting algorithm \cite{davis2015}\\
\hline
\multirow{4}[1]{*}{$0\in Ax+Bx+Cx$ (\ref{problem2-three-operator})} & Forward-backward-half-forward splitting algorithm \cite{Arias2017A}\\
& Semi-forward-reflected-backward splitting algorithm \cite{Malitsky2020SIAMJO}\\
& Semi-reflected-forward-backward splitting algorithm \cite{Cevher2020SVVA}\\
& Outer reflected forward-backward splitting algorithm \cite{Yu-tang-2020}\\
\hline
\multirow{3}[1]{*}{$0\in A_{1}x+A_{2}x+Bx$ (\ref{problem3-three-operator})} & Forward-reflected-Douglas-Rachford splitting algorithm \cite{Ryu2020JOTA}\\
& Backward-forward-reflected-backward splitting algorithm \cite{Rieger2020AMC}\\
& Backward-reflected-forward-backward splitting algorithm \cite{Rieger2020AMC}\\
\hline
\end{tabular}\label{Table-1}
\end{table}

In this paper, we consider the following four-operator monotone inclusions:
\begin{equation}\label{problem-four-operator}
\textrm{find}\quad x\in H\quad\textrm{such that}\quad 0\in A_{1}x+A_{2}x+Bx+Cx
\end{equation}

Although the four-operator monotone inclusions (\ref{problem-four-operator}) could be viewed as special cases of the three-operator monotone inclusions (\ref{problem2-three-operator}) or (\ref{problem3-three-operator}), there are some drawbacks:

(i) Let $A=A_{1}+A_{2}$, then (\ref{problem-four-operator}) is a special case of (\ref{problem2-three-operator}). Therefore, we can employ the three-operator splitting algorithms \cite{Arias2017A, Malitsky2020SIAMJO, Cevher2020SVVA, Yu-tang-2020} to solve (\ref{problem-four-operator}). However, we have to compute the resolvent $J_{\lambda A}=J_{\lambda(A_{1}+A_{2})}$, $\lambda>0$, which usually doesn't have a closed-form solution.

(ii) Let $\bar{B}=B+C$. Since $C$ is cocoercive, $\bar{B}$ is monotone and $L+\frac{1}{\beta}$-Lipschitz. Then, we can use the three-operator splitting algorithms (\ref{algorithm1-three-operator})-(\ref{algorithm3-three-operator}) to solve (\ref{problem-four-operator}). It is obvious that we do not make full use of the cocoercive property of $C$.

To overcome these drawbacks, in this paper, we introduce and analyze three new splitting algorithms for solving the monotone inclusions problem (\ref{problem-four-operator}). These algorithms are established on the foundation of the Forward-reflected-Douglas-Rachford splitting algorithm \cite{Ryu2020JOTA}, the Backward-forward-reflected-backward splitting algorithm \cite{Rieger2020AMC} and the Backward-reflected-forward-backward splitting algorithm \cite{Rieger2020AMC}. As applications, we study composite monotone inclusions involving a finite sum of maximally monotone operators.

The paper is organized as follows. In Section 2,
we introduce notations and preliminary results in monotone operator theory.
In Section 3, we present three splitting algorithms for solving monotone inclusions involving four operators and prove their convergence.
In Section 4, the proposed algorithms are applied to solve the monotone inclusions problem involving the sum of a finite of maximally monotone operators.  
In Section 5, we conduct some numerical experiments on the Projection problem onto the Minkowski sum of convex sets. Finally, we will give some conclusions.

%%%%%%%%%%%%%%%%%%%%%%%%%%%%%%%%%%%%%%%%%%%%%%%%%%%%%%%%%%%%%%%%%%%%%%%%
\section{Preliminaries}

Throughout this paper, let $\mathcal{H}$ be a real Hilbert space, and its scalar product
and the associated norm are denoted by $\langle \cdot , \cdot \rangle$ and $\| \cdot \|$, respectively.
The symbols $\rightharpoonup$ and $\rightarrow$ denote weak and strong convergence, respectively.
$N$ denotes the set of natural numbers. Let $2^{\mathcal{H}}$ be the power set of $\mathcal{H}$.

Let $A:\mathcal{H}\rightarrow 2^{\mathcal{H}}$ be a set-valued operator. The \emph{graph} of $A$ is defined by $gra A=\{(x,u)\in \mathcal{H}\times\mathcal{H}|u\in Ax\}$, the \emph{effective domain} of $A$ is defined by $dom A=\{x\in \mathcal{H}|Ax \neq \emptyset\}$, and the sets of \emph{zeros} of $A$ is defined by $zer A=\{x\in \mathcal{H}| 0\in Ax\}$. The \emph{inverse} of $A$ is $A^{-1}:u\mapsto \{x|u\in Ax\}$. Let $C$ be a nonempty closed convex set of $\mathcal{H}$, the \emph{normal cone} to $C$ is defined by
\begin{equation}\label{normal-cone-C}
   N_{C}:\mathcal{H}\rightarrow2^{\mathcal{H}}:x\mapsto\left\{
\begin{aligned}
&\{u\in \mathcal{H}|(\forall y\in C)\;\langle y-x , u\rangle\leq 0\},\;\textrm{if}\; x\in C;\\
&\emptyset,\; \textrm{ otherwise}.\\
\end{aligned}
\right.
\end{equation}

\begin{definition}(\cite{bauschkebook2017})
(Maximally monotone operator) A set-valued operator $A:\mathcal{H}\rightarrow 2^{\mathcal{H}}$ is \emph{monotone} if
\begin{equation}
(\forall (x,u),(y,v)\in gra A)\quad\langle x-y, u-v \rangle\geq 0,
\end{equation}
and it is said to be \emph{maximally monotone} if there exists no monotone operator $B:\mathcal{H}\rightarrow 2^{\mathcal{H}}$ such that $gra B$ properly contains $gra A$.
\end{definition}

\begin{definition}(\cite{bauschkebook2017})
(Resolvent) Let $A:\mathcal{H}\rightarrow 2^\mathcal{H}$ be a set-valued operator. The \emph{resolvent} $J_{A}:\mathcal{H}\rightarrow 2^\mathcal{H}$ of $A$ is
\begin{equation}
J_{A}=({Id}+A)^{-1},
\end{equation}
where $Id$ denotes the identity operator. When $A$ is maximally monotone, $J_{A}$ is single-valued.
\end{definition}

\begin{definition}(\cite{bauschkebook2017})
(Lipschitz continuous) A single-valued operator $B:\mathcal{H}\rightarrow \mathcal{H}$ is $L$-Lipschitz for some $L >0$ if
\begin{equation}
(\forall x,y\in \mathcal{H})\quad \|Bx\,-\,By\|\leq L \|x\,-\,y\|.
\end{equation}
In particular, it is nonexpansive if the operator $B$ is $1$-Lipschitz.
\end{definition}

\begin{definition}(\cite{bauschkebook2017})
(Cocoercive operator) A single-valued operator $C:\mathcal{H}\rightarrow \mathcal{H}$ is $\beta$-\emph{cocoercive} for some $\beta>0$ if
\begin{equation}
(\forall x,y\in \mathcal{H})\quad\langle x-y, Cx-Cy \rangle\geq \beta\|Cx-Cy\|^2.
\end{equation}
\end{definition}

\begin{definition}(\cite{bauschkebook2017})
(Parallel sum) The \emph{parallel sum} $A_{1}\Box \cdots\Box A_{m}:\mathcal{H}\rightarrow 2^{\mathcal{H}}$ of the operators $A_{i}$, $i=1,\cdots,m$  is
\begin{equation}
A_{1}\Box \cdots\Box A_{m}=(A_{1}^{-1}+\cdots+A_{m}^{-1})^{-1}.
\end{equation}
\end{definition}

We present some useful properties of maximally monotone operators.

\begin{lemma}(\cite{bauschkebook2017})
Let $A:\mathcal{H}\rightarrow 2^{\mathcal{H}}$ be maximally monotone, and let $\gamma >0$. Then $J_{\gamma A}: \mathcal{H}\rightarrow \mathcal{H}$ is firmly nonexpansive, that is,
\begin{equation}
(\forall x,y\in \mathcal{H})\quad \|J_{\gamma A}x-J_{\gamma A}y\|^2 + \|(Id-J_{\gamma A})x-(Id-J_{\gamma A})y \|^2\leq \|x-y\|^2.
\end{equation}
\end{lemma}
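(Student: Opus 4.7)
The plan is to reduce the firm nonexpansiveness inequality to a direct application of the monotonicity of $\gamma A$ together with the inclusion characterization of the resolvent. Since the statement treats $J_{\gamma A}$ as single-valued and everywhere defined on $\mathcal{H}$ (Minty's theorem is implicitly invoked via the preceding definition), the real content is just the two-line inequality.

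First, I would fix $x,y\in\mathcal{H}$ and set $p:=J_{\gamma A}x$ and $q:=J_{\gamma A}y$. By definition of the resolvent, $p=(Id+\gamma A)^{-1}x$, so $x\in p+\gamma Ap$, i.e.\ $x-p\in\gamma Ap$. Analogously $y-q\in\gamma Aq$. Since $\gamma A$ inherits monotonicity from $A$ (because $\gamma>0$), the monotonicity inequality applied to these two pairs in $\mathrm{gra}(\gamma A)$ yields
\begin{equation*}
\langle p-q,\,(x-p)-(y-q)\rangle\geq 0.
\end{equation*}

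Next, I would introduce the shorthand $a:=J_{\gamma A}x-J_{\gamma A}y=p-q$ and $b:=(Id-J_{\gamma A})x-(Id-J_{\gamma A})y=(x-p)-(y-q)$, so the displayed inequality is exactly $\langle a,b\rangle\geq 0$. Observing the telescoping identity $a+b=x-y$, I would then expand
\begin{equation*}
\|x-y\|^2=\|a+b\|^2=\|a\|^2+2\langle a,b\rangle+\|b\|^2\geq \|a\|^2+\|b\|^2,
\end{equation*}
which is precisely the required firm nonexpansiveness inequality.

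There is essentially no obstacle: the whole argument is a one-paragraph consequence of the definition of the resolvent and monotonicity. The only subtle points, which are standard and are implicit in the definitions quoted, are that $J_{\gamma A}$ is single-valued (forced by monotonicity of $\gamma A$) and that $\mathrm{dom}\,J_{\gamma A}=\mathcal{H}$ (Minty's theorem, using maximal monotonicity); both are tacitly assumed in the statement and do not need to be reproved here.
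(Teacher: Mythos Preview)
Your argument is correct and is precisely the standard proof: from $x-p\in\gamma Ap$ and $y-q\in\gamma Aq$ one gets $\langle p-q,(x-p)-(y-q)\rangle\ge 0$, and then the expansion of $\|x-y\|^2=\|a+b\|^2$ yields the inequality. The paper itself does not supply a proof of this lemma at all; it simply quotes it from \cite{bauschkebook2017}, so there is nothing to compare against, and your write-up fills the gap cleanly.
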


\begin{lemma}\label{lemma-maximally}(\cite{bauschkebook2017})
\emph(i) Let $A:\mathcal{H}\rightarrow 2^\mathcal{H}$ be maximally monotone , let $r,y\in \mathcal{H}$, and let $\alpha>0$. Then $A^{-1}$ and $\alpha A(x-r)+y$ are  maximally monotone.

\emph(ii) Let $\mathcal{H}$ and $\mathcal{G}$ be real Hilbert spaces. Let $A:\mathcal{H}\rightarrow 2^\mathcal{H}$ and $B:\mathcal{G}\rightarrow 2^\mathcal{G}$ be maximally monotone. Then $A\times B$ is maximally monotone.
\end{lemma}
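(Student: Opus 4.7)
The plan is to verify both parts directly from the definition of monotonicity and then establish maximality via the Minty surjectivity criterion: a monotone operator $A$ on a real Hilbert space is maximally monotone if and only if $\operatorname{ran}(\operatorname{Id}+A)=\mathcal{H}$. Once that characterization is in hand, every claim in the lemma reduces to an algebraic identity about ranges, which is where I would concentrate the work.

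For part (i), I would handle $A^{-1}$ first. Observe that $\operatorname{gra}A^{-1}$ is obtained from $\operatorname{gra}A$ by swapping coordinates, and because $\langle x-y,u-v\rangle=\langle u-v,x-y\rangle$, monotonicity is preserved under this swap. Any monotone operator that properly contains $\operatorname{gra}A^{-1}$ would, after swapping back, properly contain $\operatorname{gra}A$, contradicting maximality of $A$. Next I would treat $T:=\alpha A(\cdot-r)+y$. If $(x_i,u_i)\in\operatorname{gra}T$ with $u_i=\alpha w_i+y$ and $w_i\in A(x_i-r)$, then
\begin{equation*}
\langle x_1-x_2,u_1-u_2\rangle=\alpha\langle(x_1-r)-(x_2-r),w_1-w_2\rangle\geq 0,
\end{equation*}
so $T$ is monotone. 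For maximality, given $z\in\mathcal{H}$, I would seek $x$ solving $x+\alpha w+y=z$ for some $w\in A(x-r)$. The substitution $\tilde x=x-r$ turns this into $\tilde x+\alpha w=z-y-r$ with $w\in A\tilde x$, i.e.\ into the condition $z-y-r\in\operatorname{ran}(\operatorname{Id}+\alpha A)$. Since $\alpha A$ is easily seen to be maximally monotone (monotonicity is preserved under positive scaling, and any proper extension of $\alpha A$ would divide back to a proper extension of $A$), Minty gives $\operatorname{ran}(\operatorname{Id}+\alpha A)=\mathcal{H}$, so a solution exists and $\operatorname{ran}(\operatorname{Id}+T)=\mathcal{H}$.

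For part (ii), equip $\mathcal{H}\times\mathcal{G}$ with the natural inner product and write $(A\times B)(x,y)=Ax\times By$. Monotonicity is immediate since
\begin{equation*}
\langle (x_1,y_1)-(x_2,y_2),(u_1,v_1)-(u_2,v_2)\rangle=\langle x_1-x_2,u_1-u_2\rangle+\langle y_1-y_2,v_1-v_2\rangle\geq 0.
\end{equation*}
For maximality, I would apply Minty once more: $\operatorname{ran}(\operatorname{Id}+A\times B)=\operatorname{ran}(\operatorname{Id}+A)\times\operatorname{ran}(\operatorname{Id}+B)=\mathcal{H}\times\mathcal{G}$, where the first equality follows by separating the two coordinates in the inclusion $(p,q)\in(\operatorname{Id}+A\times B)(x,y)$, and the second uses maximality of $A$ and $B$.

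The main obstacle is the maximality of the affine transformation in part (i); the monotonicity parts and the product statement are essentially bookkeeping, but checking that $\operatorname{ran}(\operatorname{Id}+T)=\mathcal{H}$ requires a clean substitution together with the auxiliary fact that $\alpha A$ is maximally monotone for $\alpha>0$. If I wanted to avoid Minty's theorem entirely, I could instead argue maximality by a direct contradiction — a hypothetical proper monotone extension of $T$ would, after undoing the affine change of variables, yield a proper monotone extension of $A$ — but the Minty route seems the shortest and most uniform across both parts of the lemma.
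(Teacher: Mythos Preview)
Your proposal is correct. The paper does not supply its own proof of this lemma; it is quoted verbatim as a known result from Bauschke--Combettes \cite{bauschkebook2017}, so there is no ``paper's proof'' to compare against. Your argument via the Minty surjectivity characterization is exactly the standard route taken in that reference (see, e.g., Propositions 20.22, 20.23, and 26.4 there), and every step you outline---the graph-swap for $A^{-1}$, the affine change of variables reducing $\alpha A(\cdot-r)+y$ to the surjectivity of $\operatorname{Id}+\alpha A$, and the coordinatewise decomposition for $A\times B$---is sound.
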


\begin{lemma}\label{lemma2-maximally}(\cite{bauschkebook2017})
Let $A:\mathcal{H}\rightarrow 2^\mathcal{H}$ be maximally monotone and $B:\mathcal{H}\rightarrow \mathcal{H}$ be monotone and Lipschitz with $dom B = H$. Then $A+B$ is  maximally monotone.
\end{lemma}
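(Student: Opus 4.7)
The plan is to invoke Minty's surjectivity characterization of maximal monotonicity: a monotone operator $T$ on $\mathcal{H}$ is maximally monotone if and only if $\mathrm{ran}(\mathrm{Id}+\gamma T)=\mathcal{H}$ for some $\gamma>0$. Since $A+B$ is monotone (as the sum of two monotone operators), it suffices to exhibit a single scalar $\lambda>0$ such that, for every $z\in\mathcal{H}$, the inclusion $z\in x+\lambda Ax+\lambda Bx$ has a solution $x\in\mathcal{H}$.

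For fixed $z\in\mathcal{H}$ I would rewrite the target inclusion as $z-\lambda Bx\in(\mathrm{Id}+\lambda A)x$, equivalently $x=J_{\lambda A}(z-\lambda Bx)$. This reduces the surjectivity problem to finding a fixed point of the map $T_{z}\colon\mathcal{H}\to\mathcal{H}$ defined by $T_{z}(x):=J_{\lambda A}(z-\lambda Bx)$; the map is well defined because $\mathrm{dom}\,B=\mathcal{H}$ and $J_{\lambda A}$ is single-valued by maximal monotonicity of $A$.

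The key Lipschitz estimate is routine: by the preceding resolvent lemma $J_{\lambda A}$ is firmly nonexpansive, hence $1$-Lipschitz, and $B$ is $L$-Lipschitz by hypothesis, so
\begin{equation*}
\|T_{z}x-T_{z}y\|\leq\|(z-\lambda Bx)-(z-\lambda By)\|=\lambda\|Bx-By\|\leq\lambda L\|x-y\|.
\end{equation*}
Choosing any $\lambda\in(0,1/L)$ makes $T_{z}$ a strict contraction on the complete space $\mathcal{H}$, so the Banach fixed-point theorem delivers a fixed point $x_{\lambda}$, which by construction satisfies $z\in x_{\lambda}+\lambda(A+B)x_{\lambda}$. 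Since $z\in\mathcal{H}$ was arbitrary, $\mathrm{ran}(\mathrm{Id}+\lambda(A+B))=\mathcal{H}$, and Minty's criterion yields maximal monotonicity of $\lambda(A+B)$, hence also of $A+B$ (Lemma \ref{lemma-maximally}(i) with $\alpha=1/\lambda$).

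The only real obstacle is that the natural fixed-point map associated with $\mathrm{Id}+A+B$ is a priori only $L$-Lipschitz rather than contractive; the monotone--Lipschitz hypothesis on $B$ gives no cocoercivity that would allow the step size $\lambda=1$ directly. The workaround is to pass to the rescaled inclusion $\lambda(A+B)$ with $\lambda<1/L$ and then transfer surjectivity back to the unscaled sum via Minty's characterization.
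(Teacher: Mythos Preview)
Your argument is correct. Note, however, that the paper does not supply its own proof of this lemma: it is stated without proof and attributed to \cite{bauschkebook2017}. In that reference the result is typically obtained by first observing that a monotone continuous operator with full domain is itself maximally monotone, and then invoking Rockafellar's sum theorem, which applies because $\mathrm{dom}\,A\cap\mathrm{int}\,\mathrm{dom}\,B=\mathrm{dom}\,A\neq\emptyset$. Your route via Minty's surjectivity criterion and the Banach fixed-point theorem is a valid, more self-contained alternative that sidesteps the sum-theorem machinery entirely; the cost is that you must introduce the auxiliary scaling $\lambda<1/L$ and then transfer maximality back via Lemma~\ref{lemma-maximally}(i). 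One minor edge case: the interval $(0,1/L)$ is vacuous when $L=0$, but then $B$ is constant and $T_{z}$ is constant as well, so the fixed point is trivial.
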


We shall make use of the following Opial's lemma to prove the convergence.

\begin{lemma}\label{opial-lemma}(\cite{bauschkebook2017})
Let $Z$ be a nonempty subset of $\mathcal{H}$ and $\{z_{n}\}$ be a sequence in $\mathcal{H}$. Suppose the following conditions hold:\\
(i) For every $z\in Z$, $\lim_{n\rightarrow\infty}\|z_{n}-z\|$ exists;\\
(ii) Every weak cluster point of $\{z_{n}\}$ belongs to $Z$.\\
Then $\{z_{n}\}$ converges weakly to a point in $Z$.
\end{lemma}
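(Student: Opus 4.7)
The plan is to prove Opial's lemma by the standard weak-topology argument: extract a bounded sequence, identify a unique weak limit among all weak cluster points (which exist because the ambient space is Hilbert), and then upgrade to weak convergence of the entire sequence.

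First I would observe that condition (i) forces boundedness. Pick any fixed $z \in Z$; since $\|z_n - z\| \to \ell(z) < \infty$, the sequence $\{z_n\}$ sits in a ball around $z$, and hence in a ball around $0$. By the Banach-Alaoglu / Eberlein-Šmulian theorem applied in the Hilbert space $\mathcal{H}$, the set of weak cluster points of $\{z_n\}$ is therefore nonempty, and by hypothesis (ii) every such cluster point lies in $Z$.

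The heart of the proof is to show there is only \emph{one} weak cluster point. Suppose subsequences $\{z_{n_k}\}$ and $\{z_{m_k}\}$ converge weakly to $z^\star$ and $z^{\star\star}$ respectively, both in $Z$. Expanding
\begin{equation*}
\|z_n - z\|^2 = \|z_n\|^2 - 2\langle z_n, z\rangle + \|z\|^2
\end{equation*}
for $z = z^\star$ and $z = z^{\star\star}$ and subtracting, I would deduce that $\langle z_n, z^\star - z^{\star\star}\rangle$ is a convergent real sequence (since both $\|z_n - z^\star\|^2$ and $\|z_n - z^{\star\star}\|^2$ converge by (i)). Evaluating its limit along the two subsequences using weak convergence gives $\langle z^\star, z^\star - z^{\star\star}\rangle = \langle z^{\star\star}, z^\star - z^{\star\star}\rangle$, i.e.\ $\|z^\star - z^{\star\star}\|^2 = 0$, so $z^\star = z^{\star\star}$.

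Finally, uniqueness of the weak cluster point together with boundedness implies weak convergence of the full sequence: if $\{z_n\}$ did not converge weakly to $z^\star$, some weak neighbourhood of $z^\star$ would be missed infinitely often, yielding a subsequence whose further weak cluster point differs from $z^\star$, a contradiction. The only potentially delicate step is the uniqueness argument; every other piece is a textbook application of boundedness plus weak sequential compactness, so I would expect the verification of $\langle z_n, z^\star - z^{\star\star}\rangle$ being Cauchy, and the subsequent inner-product manipulation, to be the focal point of the write-up.
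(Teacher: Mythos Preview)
Your proof is correct and is the standard argument for Opial's lemma. Note that the paper does not actually prove this lemma; it simply cites it from \cite{bauschkebook2017}, so there is no in-paper proof to compare against, but your approach matches the textbook proof found there.
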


%%%%%%%%%%%%%%%%%%%%%%%%%%%%%%%%%%%%%%%%%%%%%%%%%%%%%%%%%%%%%%%%%%%%%%%%%%%%%%%%%%%%%%%%%%%%%%%%%%%%%%%%%%
\section{Main algorithms and convergence theorems}

In this section, we present three main algorithms and prove their convergence theorems.

\subsection{Backward-Semi-Forward-Reflected-Backward splitting algorithm}

Now, we are ready to introduce the first splitting algorithm.

\begin{equation}\label{Algorithm 1}
\left\{
\begin{aligned}
& x_{n+1}=J_{\gamma A_{1}}z_{n}\\
& y_{n+1}=J_{\gamma A_{2}}(2x_{n+1}-z_{n}-2\gamma By_{n}+\gamma By_{n-1}-\gamma Cy_{n})\\
& z_{n+1}=z_{n}+y_{n+1}-x_{n+1}
\end{aligned}
\right.
\end{equation}

\begin{remark}
The following iterative algorithms are particular cases of (\ref{Algorithm 1}).

(i) Semi-forward-reflected-backward splitting algorithm \cite{Malitsky2020SIAMJO}: if $A_{1}=0$, (\ref{Algorithm 1}) reduces to
\begin{equation}\label{Algorithm 1-special case-1}
 z_{n+1}=J_{\gamma A_{2}}(z_{n}-2\gamma Bz_{n}+\gamma Bz_{n-1}-\gamma Cz_{n}).
\end{equation}

(ii) Backward-forward-reflected-backward splitting algorithm \cite{Rieger2020AMC}: if $C=0$, (\ref{Algorithm 1}) becomes
\begin{equation}\label{Algorithm 1-special case-2}
\left\{
\begin{aligned}
& x_{n+1}=J_{\gamma A_{1}}z_{n}\\
& y_{n+1}=J_{\gamma A_{2}}(2x_{n+1}-z_{n}-2\gamma By_{n}+\gamma By_{n-1})\\
& z_{n+1}=z_{n}+y_{n+1}-x_{n+1}.
\end{aligned}
\right.
\end{equation}
\end{remark}
Therefore, we call the iterative algorithm (\ref{Algorithm 1}) a backward-semi-forward-reflected-backward splitting algorithm.

\begin{lemma}\label{lemma1}
Suppose that there exist $x,z\in \mathcal{H} $ such that $z-x \in \gamma A_{1}x$ and $x-z \in \gamma(A_{2}+B+C)x$. Let the sequences $\emph\{x_{n}\}$, $\emph\{y_{n}\}$ and $\emph\{z_{n}\}$ be defined in $\emph(\ref{Algorithm 1})$. Then, for all $n\in N$, we have
\begin{equation}\label{the result of lemma1}
\begin{aligned}
\|&z_{n+1}-z\|^2 + 2\gamma\langle By_{n+1}-By_{n},x-y_{n+1} \rangle + \|z_{n+1}-z_{n}\|^2\\
&\leq \|z_{n}-z\|^2 + 2\gamma\langle By_{n}-By_{n-1},x-y_{n} \rangle + 2\gamma\langle By_{n}-By_{n-1},y_{n}-y_{n+1}\rangle + 2\gamma\langle Cy_{n}-Cx,x-y_{n+1}\rangle.
\end{aligned}
\end{equation}
\end{lemma}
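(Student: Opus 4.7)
The plan is to extract resolvent inclusions from Algorithm 1 and combine them with the hypotheses on $(x,z)$ via monotonicity of $A_1$, $A_2$, and $B$. The definition of $J_{\gamma A_1}$ gives $z_n - x_{n+1} \in \gamma A_1 x_{n+1}$, while the second line of (\ref{Algorithm 1}) yields $2x_{n+1} - z_n - y_{n+1} - 2\gamma By_n + \gamma By_{n-1} - \gamma Cy_n \in \gamma A_2 y_{n+1}$. On the limit side, the hypothesis rewrites as $z - x \in \gamma A_1 x$ and $x - z - \gamma Bx - \gamma Cx \in \gamma A_2 x$. Applying monotonicity of $A_1$ at $(x_{n+1}, x)$ produces, after multiplying by $2$, the inequality $2\langle x_{n+1} - x, z_n - z\rangle \geq 2\|x_{n+1} - x\|^2$, and monotonicity of $A_2$ at $(y_{n+1}, x)$ produces a second inequality involving the differences $Bx - 2By_n + By_{n-1}$ and $Cx - Cy_n$.

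Adding the two inequalities, I would simplify the $A_1/A_2$ cross-terms using the key identity $y_{n+1} - x_{n+1} = z_{n+1} - z_n$ together with the polarization identity $2\langle u, v\rangle = \|u+v\|^2 - \|u\|^2 - \|v\|^2$. This collapses the resolvent contribution to $-\|z_{n+1} - z_n\|^2 + \|z_n - z\|^2 - \|z_{n+1} - z\|^2 + 2\|x_{n+1} - x\|^2$; the two occurrences of $2\|x_{n+1}-x\|^2$ then cancel, yielding the intermediate bound
\begin{equation*}
\|z_{n+1} - z\|^2 + \|z_{n+1} - z_n\|^2 \leq \|z_n - z\|^2 + 2\gamma\langle y_{n+1} - x, Bx - 2By_n + By_{n-1}\rangle + 2\gamma\langle y_{n+1} - x, Cx - Cy_n\rangle.
\end{equation*}
The $C$-term is already exactly $2\gamma\langle Cy_n - Cx, x - y_{n+1}\rangle$, so the remaining task is to rearrange the $B$-block.

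For the $B$-block, the idea is to split $Bx - 2By_n + By_{n-1} = (Bx - By_{n+1}) + (By_{n+1} - By_n) - (By_n - By_{n-1})$. The first piece gives $-2\gamma\langle y_{n+1} - x, By_{n+1} - Bx\rangle$, which is non-positive by monotonicity of $B$ and hence can be dropped. The second piece is exactly $-2\gamma\langle By_{n+1} - By_n, x - y_{n+1}\rangle$, which I move to the left-hand side. The third piece contributes $2\gamma\langle By_n - By_{n-1}, x - y_{n+1}\rangle$, which I decompose further as $2\gamma\langle By_n - By_{n-1}, x - y_n\rangle + 2\gamma\langle By_n - By_{n-1}, y_n - y_{n+1}\rangle$, producing precisely the two $B$-terms appearing on the right of (\ref{the result of lemma1}).

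The main obstacle is the bookkeeping of this final $B$-decomposition: the combination $-2By_n + By_{n-1}$ has to be broken in a way that simultaneously (a) isolates the inner product $\langle y_{n+1}-x, By_{n+1}-Bx\rangle$ on which monotonicity of $B$ can be invoked, and (b) produces the shifted, telescoping-style pairing of $\langle By_{n+1}-By_n, x-y_{n+1}\rangle$ at step $n+1$ against $\langle By_n-By_{n-1}, x-y_n\rangle$ at step $n$, which is the device that makes the estimate useful for later convergence analysis. Once this decomposition is written down, the remainder reduces to monotonicity of $B$ and routine inner product manipulations.
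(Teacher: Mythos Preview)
Your proposal is correct and follows essentially the same route as the paper: both extract the resolvent inclusions, apply monotonicity of $A_1$ and $A_2$ at the pairs $(x_{n+1},x)$ and $(y_{n+1},x)$, reduce the combined structural terms via $y_{n+1}-x_{n+1}=z_{n+1}-z_n$ to the single inner product $-2\langle z_{n+1}-z_n,z_{n+1}-z\rangle$ (then polarized), and finally split the $B$-block so that monotonicity of $B$ discards $\langle y_{n+1}-x,By_{n+1}-Bx\rangle$ while the remaining pieces form the telescoping pair. The only cosmetic difference is that the paper merges the $A_1$ inequality into the $A_2$ computation in one line and uses the two-term split $2By_n-Bx-By_{n-1}=(By_n-Bx)+(By_n-By_{n-1})$ before invoking monotonicity of $B$, whereas you add the $A_1$ and $A_2$ inequalities separately and use the three-term split; the algebra and the use of monotonicity are identical.
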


\begin{proof}
Since $A_{1}$ is monotone, we have
\begin{equation}\label{the monotonicity of A1}
0\leq \langle (z-x)-(z_{n}-x_{n+1}),x-x_{n+1}\rangle.
\end{equation}
Combining the monotonicity of $A_{2}$ and (\ref{the monotonicity of A1}), it follows that
\begin{equation}\label{the monotonicity of A2}
\begin{aligned}
0\leq \;& \langle (x-z-\gamma (B+C)x)-(2x_{n+1}-z_{n}-y_{n+1}-2\gamma By_{n}+\gamma By_{n-1}-\gamma Cy_{n})),x-y_{n+1}\rangle\\
= \;&\langle (x-z)-(x_{n+1}-z_{n}),x-x_{n+1}\rangle + \langle z_{n+1}-z_{n},z-z_{n+1}\rangle + \gamma\langle By_{n}-Bx,x-y_{n+1}\rangle\\
& +\gamma\langle By_{n}-By_{n-1},x-y_{n+1}\rangle + \gamma\langle Cy_{n}-Cx,x-y_{n+1}\rangle\\
\leq\;& \langle z_{n+1}-z_{n},z-z_{n+1}\rangle + \gamma\langle By_{n}-Bx,x-y_{n+1}\rangle +\gamma\langle By_{n}-By_{n-1},x-y_{n+1}\rangle \\
& + \gamma\langle Cy_{n}-Cx,x-y_{n+1}\rangle.
\end{aligned}
\end{equation}
Using the monotonicity of $B$ yields
\begin{equation}\label{the monotonicity of B}
\begin{aligned}
\gamma\langle By_{n}-Bx,x-y_{n+1}\rangle &= \gamma\langle By_{n}-By_{n+1},x-y_{n+1}\rangle + \gamma\langle By_{n+1}-Bx,x-y_{n+1}\rangle\\
& \leq \gamma\langle By_{n}-By_{n+1},x-y_{n+1}\rangle.
\end{aligned}
\end{equation}
By substituting the estimate (\ref{the monotonicity of B}) into (\ref{the monotonicity of A2}), and using the identity
\begin{equation}
\langle z_{n+1}-z_{n},z-z_{n+1}\rangle = \frac{1}{2}(\|z_{n}-z\|^2-\|z_{n+1}-z_{n}\|^2-\|z_{n+1}-z\|^2),
\end{equation}
the inequality (\ref{the monotonicity of A2}) can be expressed as
\begin{equation}
\begin{aligned}
0\leq \;&\|z_{n}-z\|^2-\|z_{n+1}-z_{n}\|^2-\|z_{n+1}-z\|^2 + 2\gamma\langle By_{n}-By_{n+1},x-y_{n+1}\rangle \\
&+ 2\gamma\langle By_{n}-By_{n-1},x-y_{n+1}\rangle + 2\gamma\langle Cy_{n}-Cx,x-y_{n+1}\rangle.
\end{aligned}
\end{equation}
which implies the claimed inequality (\ref{the result of lemma1}) holds.
\end{proof}

\begin{theorem}\label{Theorem 1}
Let $A_{i}:\mathcal{H}\rightarrow 2^{\mathcal{H}}$, $i=1,2$ be maximally monotone, let $B:\mathcal{H}\rightarrow \mathcal{H}$ be monotone and $L$-Lipschitz continuous, let $C:\mathcal{H}\rightarrow \mathcal{H}$ be $\beta$-cocoercive, and assume that $zer(A_{1}+A_{2}+B+C)\neq \emptyset$. Let $\gamma \in \left(0,\frac{\beta}{2(1+4\beta L)}\right)$. Let $z_{0}$, $y_{0}$, $y_{-1}\in \mathcal{H}$ and consider the sequences $\{x_{n}\}$, $\{y_{n}\}$, $\{z_{n}\}$ defined in \emph{(\ref{Algorithm 1})}. Then, for all $n\in N$, the following hold:\\
\emph{(i)} The sequence $\{z_{n}\}$ converges weakly to a point $\bar{z}\in \mathcal{H}$.\\
\emph{(ii)} The sequences $\{x_{n}\}$, $\{y_{n}\}$ converge weakly to a point $\bar{x}\in \mathcal{H}$.\\
\emph{(iii)} $\bar{x}=J_{\gamma A_{1}}(\bar{z})\in zer(A_{1}+A_{2}+B+C)$.
\end{theorem}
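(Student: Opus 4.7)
The plan is to apply Opial's Lemma~\ref{opial-lemma} to $\{z_n\}$, where the target set consists of those $z\in\mathcal{H}$ admitting a pair $(x,z)$ satisfying the hypotheses of Lemma~\ref{lemma1}. Nonemptiness of this set follows from $zer(A_1+A_2+B+C)\neq\emptyset$: if $\bar x$ is any such zero, then by maximality there exists $v\in A_1\bar x$ with $-v\in (A_2+B+C)\bar x$, and $z:=\bar x+\gamma v$ satisfies $z-\bar x\in\gamma A_1\bar x$, $\bar x-z\in\gamma(A_2+B+C)\bar x$ and $\bar x=J_{\gamma A_1}z$.

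The main technical step is to promote the one-step estimate \eqref{the result of lemma1} into a telescoping Lyapunov inequality. I would introduce
$$\Phi_n:=\|z_n-z\|^2+2\gamma\langle By_n-By_{n-1},x-y_n\rangle+a\|y_n-y_{n-1}\|^2$$
for a constant $a>0$ to be chosen. The Lipschitz cross term on the right of \eqref{the result of lemma1} is controlled by Young's inequality,
$$2\gamma\langle By_n-By_{n-1},y_n-y_{n+1}\rangle\leq\gamma L\|y_n-y_{n-1}\|^2+\gamma L\|y_n-y_{n+1}\|^2,$$
while the cocoercive cross term is split as $2\gamma\langle Cy_n-Cx,x-y_{n+1}\rangle=2\gamma\langle Cy_n-Cx,x-y_n\rangle+2\gamma\langle Cy_n-Cx,y_n-y_{n+1}\rangle$, whose first part contributes $-2\gamma\beta\|Cy_n-Cx\|^2$ by cocoercivity and whose second is absorbed by a second Young estimate of the form $\varepsilon\|y_n-y_{n+1}\|^2+(\gamma^2/\varepsilon)\|Cy_n-Cx\|^2$. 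To route all $y$-increments back into $z$-increments, I use $y_{n+1}-x_{n+1}=z_{n+1}-z_n$ together with the nonexpansivity $\|x_{n+1}-x_n\|\leq\|z_n-z_{n-1}\|$, yielding a bound of $\|y_{n+1}-y_n\|^2$ by a constant combination of $\|z_{n+1}-z_n\|^2$ and $\|z_n-z_{n-1}\|^2$. Choosing $a$ and $\varepsilon$ so that the step-size restriction $\gamma<\beta/(2(1+4\beta L))$ makes every remaining coefficient strictly positive should give
$$\Phi_{n+1}+c_1\|z_{n+1}-z_n\|^2+c_2\|Cy_n-Cx\|^2\leq\Phi_n$$
with $c_1,c_2>0$. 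The main obstacle is precisely this bookkeeping: arranging the Young constants so that the hybrid bound continuously interpolates the sharp limiting thresholds $\gamma<1/(8L)$ (recovering \cite{Rieger2020AMC} when $C=0$) and $\gamma<\beta/2$ (when $B=0$).

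From the telescoping I extract boundedness of $\{\Phi_n\}$, summability $\sum_n\|z_{n+1}-z_n\|^2<\infty$ and $\sum_n\|Cy_n-Cx\|^2<\infty$, and (via the increment bound above) $\|y_{n+1}-y_n\|\to 0$; using $\|x-y_n\|\leq\|z_{n-1}-z\|+\|z_n-z_{n-1}\|$ the cross term in $\Phi_n$ is controlled by $\|z_n-z\|^2$ modulo summable quantities, so $\lim_n\|z_n-z\|$ exists, which is Opial's first condition. For the second condition, let $z_{n_k}\rightharpoonup\bar z$ along a subsequence; since $y_{n+1}-x_{n+1}=z_{n+1}-z_n\to 0$, any weak cluster point $\bar x$ of $\{x_{n_k+1}\}$ is simultaneously one of $\{y_{n_k+1}\}$. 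The resolvent identities give
$$\alpha_k:=\frac{z_{n_k}-x_{n_k+1}}{\gamma}\in A_1 x_{n_k+1},\qquad \beta_k:=\frac{x_{n_k+1}-z_{n_k+1}}{\gamma}+By_{n_k+1}-2By_{n_k}+By_{n_k-1}+Cy_{n_k+1}-Cy_{n_k}\in(A_2+B+C)y_{n_k+1},$$
and Lipschitz continuity of $B$ and $C$ combined with $\|y_{n+1}-y_n\|\to 0$ and $\|z_{n+1}-z_n\|\to 0$ imply $\alpha_k+\beta_k\to 0$. Testing against an arbitrary $(u,v_1+v_2)\in gra(A_1+A_2+B+C)$ with $v_1\in A_1 u$, $v_2\in(A_2+B+C)u$ and summing the monotonicity inequalities for $A_1$ and for $A_2+B+C$ (the latter maximally monotone by Lemma~\ref{lemma2-maximally}) gives $\langle\bar x-u,v_1+v_2\rangle\leq 0$ in the limit; maximality of $A_1+A_2+B+C$ then forces $0\in(A_1+A_2+B+C)\bar x$, while the weak--strong closedness of $gra(A_1)$ applied to $(x_{n_k+1},\alpha_k)$ identifies $\bar x=J_{\gamma A_1}\bar z$. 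Opial's lemma now yields~(i), and~(ii)--(iii) follow from $\bar x=J_{\gamma A_1}\bar z$ together with $y_n-x_n\to 0$.
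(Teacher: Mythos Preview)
Your Lyapunov/telescoping argument is essentially the paper's: both combine Lemma~\ref{lemma1} with Young estimates on the $B$- and $C$-cross terms, convert $\|y_{n+1}-y_n\|$ into $\|z_{n+1}-z_n\|$ and $\|z_n-z_{n-1}\|$ via $y_{n+1}=z_{n+1}-z_n+x_{n+1}$ and firm nonexpansivity of $J_{\gamma A_1}$, and then verify the coefficient sign conditions are met under $\gamma<\beta/(2(1+4\beta L))$. The only cosmetic difference is that the paper puts the increments into $V_n$ as $z$-differences (with one extra lag), while you carry $\|y_n-y_{n-1}\|^2$ in $\Phi_n$.

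The genuine gap is in the cluster-point identification. You conclude $0\in(A_1+A_2+B+C)\bar x$ from $\langle\bar x-u,v_1+v_2\rangle\leq 0$ for all decomposable $(u,v_1+v_2)$ by invoking ``maximality of $A_1+A_2+B+C$''. But no constraint qualification between $A_1$ and $A_2$ is assumed, so $A_1+A_2+B+C$ need not be maximally monotone; this step fails in general. Independently, your identification $\bar x=J_{\gamma A_1}\bar z$ via weak--strong closedness of $gra(A_1)$ does not go through as written, since $\alpha_k=(z_{n_k}-x_{n_k+1})/\gamma$ converges only weakly. And even granting both of those, you would still owe the inclusion $\bar x-\bar z\in\gamma(A_2+B+C)\bar x$, which is part of the definition of your target set but is never established.

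The paper sidesteps all three issues at once by passing to the product operator
\[
\bm{A}+\bm{B}=\begin{bmatrix}(\gamma A_1)^{-1}&0\\0&\gamma(A_2+B+C)\end{bmatrix}+\begin{bmatrix}0&-\mathrm{Id}\\\mathrm{Id}&0\end{bmatrix},
\]
which is maximally monotone by Lemmas~\ref{lemma-maximally} and~\ref{lemma2-maximally} without any qualification between $A_1$ and $A_2$. The iteration yields an inclusion of the form (left side)$\,\in(\bm A+\bm B)$(right side) where the left side converges \emph{strongly} to $0$ and the right side converges weakly to $(\bar z-\bar x,\bar x)$; weak--strong closedness then gives simultaneously $\bar z-\bar x\in\gamma A_1\bar x$ and $\bar x-\bar z\in\gamma(A_2+B+C)\bar x$, hence $\bar z$ lies in the target set and $\bar x=J_{\gamma A_1}\bar z\in zer(A_1+A_2+B+C)$. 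Replacing your last paragraph by this product-space argument closes the gap.
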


\begin{proof}
We define the set $Z$ in Lemma \ref{opial-lemma} by
\begin{equation}\label{definition of Z}
Z\,:=\{z\in \mathcal{H}\,:\, J_{\gamma A_{1}}z \in zer(A_{1}+A_{2}+B+C),\;(J_{\gamma A_{1}}-\text{Id})z\in \gamma (A_{2}+B+C)(J_{\gamma A_{1}}z) \}.
\end{equation}
Next, we need to prove that $Z$ is nonempty. Note that $zer(A_{1}+A_{2}+B+C)\neq \emptyset$, thus there exist $x \in zer(A_{1}+A_{2}+B+C)$ and $z\in \mathcal{H}$ such that $z-x \in \gamma A_{1}x$ and $x-z \in \gamma(A_{2}+B+C)x$. Using the two inclusions, it gives
\begin{equation}
x=J_{\gamma A_{1}}z \quad \textrm{and} \quad (J_{\gamma A_{1}}-\text{Id})z\in \gamma (A_{2}+B+C)(J_{\gamma A_{1}}z),
\end{equation}
which imply $z\in Z$, that is, the set $Z$ is nonempty.

Let $z\in Z$ and define $x\,:=J_{\gamma A_{1}}z$. By Lemma \ref{lemma1}, for all $n\in N$, we have
\begin{equation}\label{the result of lemma1-}
\begin{aligned}
\|&z_{n+1}-z\|^2 + 2\gamma\langle By_{n+1}-By_{n},x-y_{n+1} \rangle + \|z_{n+1}-z_{n}\|^2\\
&\leq \|z_{n}-z\|^2 + 2\gamma\langle By_{n}-By_{n-1},x-y_{n} \rangle + 2\gamma\langle By_{n}-By_{n-1},y_{n}-y_{n+1}\rangle + 2\gamma\langle Cy_{n}-Cx,x-y_{n+1}\rangle.
\end{aligned}
\end{equation}
By applying Young's inequality and using the nonexpansivity of $\text{Id}-J_{\gamma A_{1}}$, which follows the firm  nonexpansivity of $J_{\gamma A_{1}}$, we obtain
\begin{equation}\label{the equality-1}
\begin{aligned}
\|y_{n}-y_{n-1}\|^2 \;& = \|(z_{n}-z_{n-1}+x_{n})-(z_{n-1}-z_{n-2}+x_{n-1})\|^2\\
& \leq (1+a)\|z_{n}-z_{n-1}\|^2 + (1+\frac{1}{a})\|(x_{n}-z_{n-1})-(x_{n-1}-z_{n-2})\|^2\\
& \leq (1+a)\|z_{n}-z_{n-1}\|^2 + (1+\frac{1}{a})\|z_{n-1}-z_{n-2}\|^2
\end{aligned}
\end{equation}
for some $a>0$. Combining the Lipschitz property of $B$ and (\ref{the equality-1}), the second-last term in (\ref{the result of lemma1-}) can be estimated by
\begin{equation}\label{estimate-1}
\begin{aligned}
& 2\gamma \langle By_{n}-By_{n-1},y_{n}-y_{n+1}\rangle \\
\leq \;& \gamma L(\|y_{n}-y_{n-1}\|^2 + \|y_{n+1}-y_{n}\|^2)\\
\leq \;& (1+\frac{1}{a})\gamma L\|z_{n-1}-z_{n-2}\|^2 + (2+a+\frac{1}{a})\gamma L\|z_{n}-z_{n-1}\|^2 + (1+a)\gamma L\|z_{n+1}-z_{n}\|^2.
\end{aligned}
\end{equation}
Combining the cocoercivity of $C$ and (\ref{the equality-1}), for all $\varepsilon>0$, the last term in (\ref{the result of lemma1-}) can be estimated by
\begin{equation}\label{estimate-2}
\begin{aligned}
& 2\gamma \langle Cy_{n}-Cx,x-y_{n+1}\rangle \\
= \;& 2\gamma \langle Cy_{n}-Cx,x-y_{n} \rangle + 2\varepsilon\langle \frac{\gamma}{\varepsilon}(Cy_{n}-Cx),y_{n}-y_{n+1}\rangle\\
\leq \;& -2\gamma\beta \|Cy_{n}-Cx\|^2 + \frac{\gamma^2}{\varepsilon}\|Cy_{n}-Cx\|^2 + \varepsilon \|y_{n}-y_{n+1}\|^2 - \varepsilon \|\frac{\gamma}{\varepsilon}(Cy_{n}-Cx)- (y_{n}-y_{n+1})\|^2\\
\leq \;& (1+a)\varepsilon \|z_{n+1}-z_{n}\|^2 + (1+\frac{1}{a})\varepsilon \|z_{n}-z_{n-1}\|^2 - \frac{\gamma}{\varepsilon}(2\beta \varepsilon-\gamma)\|Cy_{n}-Cx\|^2.
\end{aligned}
\end{equation}
By substituting the estimates (\ref{estimate-1}), (\ref{estimate-2}) into (\ref{the result of lemma1-}), it follows that
\begin{equation}\label{tranformed result of lemma1}
\begin{aligned}
 \|& z_{n+1}-z\|^2 + 2\gamma\langle By_{n+1}-By_{n},x-y_{n+1} \rangle + \|z_{n+1}-z_{n}\|^2\\
 \leq &   \|z_{n}-z\|^2 + 2\gamma\langle By_{n}-By_{n-1},x-y_{n} \rangle + (1+a)(\gamma L +\varepsilon) \|z_{n+1}-z_{n}\|^2 \\
\; & +  ((2+a+\frac{1}{a})\gamma L + (1+\frac{1}{a})\varepsilon)\|z_{n}-z_{n-1}\|^2 + (1+\frac{1}{a})\gamma L\|z_{n-1}-z_{n-2}\|^2 - \frac{\gamma}{\varepsilon}(2\beta \varepsilon-\gamma)\|Cy_{n}-Cx\|^2.
\end{aligned}
\end{equation}
For convenience, we denote
\begin{equation}
V_{n}\; :=  \|z_{n}-z\|^2 + 2\gamma\langle By_{n}-By_{n-1},x-y_{n} \rangle  +  ((3+a+\frac{2}{a})\gamma L + (1+\frac{1}{a})\varepsilon)\|z_{n}-z_{n-1}\|^2 + (1+\frac{1}{a})\gamma L\|z_{n-1}-z_{n-2}\|^2.
\end{equation}
Hence, (\ref{tranformed result of lemma1}) can be simply expressed as
\begin{equation}\label{energy function}
V_{n+1} + (1-(2+a+\frac{1}{a})(\varepsilon+2\gamma L))\|z_{n+1}-z_{n}\|^2 + \frac{\gamma}{\varepsilon}(2\beta \varepsilon-\gamma)\|Cy_{n}-Cx\|^2 \leq V_{n}.
\end{equation}
In order to ensure that the sequence $\{V_{n}\}$ is nonincreasing, the second and third terms in (\ref{energy function}) shall be positive, from which we derive two upper bounds of $\gamma$: $\gamma< \frac{1-(2+a+\frac{1}{a})\varepsilon}{2L(2+a+\frac{1}{a})}$ and $\gamma<2\beta\varepsilon$. To get the largest interval for $\gamma$, taking these two bounds equal yields $\varepsilon=\frac{1}{(2+a+\frac{1}{a})(1+4\beta L)}>0$. Therefore, in the particular case when $a=1$, we obtain the desired result of $\gamma$.
In addition, (\ref{energy function}) gives
\begin{equation}\label{energy function-}
V_{n+1} + \varepsilon' \|z_{n+1}-z_{n}\|^2 \leq V_{n},
\end{equation}
which implies that
\begin{equation}\label{sum of energy function}
V_{n+1} + \varepsilon' \sum_{i=0}^{n}\|z_{i+1}-z_{i}\|^2 \leq V_{0},
\end{equation}
where $\varepsilon' =1-\frac{1}{1+4\beta L}-8\gamma L$. Next, we need to show that $\{V_{n+1}\}$ has a lower bound. By the nonexpansivity of $J_{\gamma A_{1}}$, the Lipschitz property of $B$ and (\ref{the equality-1}), it follows that
\begin{equation}
\begin{aligned}
& 2\gamma \langle By_{n+1}-By_{n},x-y_{n+1} \rangle\\
\leq &\; \gamma L\|y_{n+1}-y_{n}\|^2 + \gamma L\|(z_{n+1}-x_{n+2})-(z_{n}-x_{n+1})+(x_{n+2}-x)\|^2\\
\leq &\; 2\gamma L\|z_{n}-z_{n-1}\|^2 + 2\gamma L\|z_{n+1}-z_{n}\|^2 + 2\gamma L\|(z_{n+1}-x_{n+2})-(z_{n}-x_{n+1})\|^2 + 2\gamma L\|x_{n+2}-x\|^2\\
\leq &\; 2\gamma L\|z_{n}-z_{n-1}\|^2 + 4\gamma L\|z_{n+1}-z_{n}\|^2 + 2\gamma L\|z_{n+1}-z\|^2,
\end{aligned}
\end{equation}
which gives
\begin{equation}
\begin{aligned}\label{lower bound of Vn}
V_{n+1} & = \|z_{n+1}-z\|^2 + 2\gamma\langle By_{n+1}-By_{n},x-y_{n+1} \rangle  +  (6\gamma L + 2\varepsilon)\|z_{n+1}-z_{n}\|^2 + 2\gamma L\|z_{n}-z_{n-1}\|^2\\
& \geq (1-2\gamma L)\|z_{n+1}-z\|^2 + (2\gamma L + 2\varepsilon)\|z_{n+1}-z_{n}\|^2\\
& \geq (6\gamma L + 2\varepsilon)\|z_{n+1}-z\|^2 \geq 0.
\end{aligned}
\end{equation}
Consequently, $\{V_{n}\}$ converges, together with (\ref{sum of energy function}), (\ref{lower bound of Vn}), which implies $\|z_{n+1}-z_{n}\| \rightarrow 0$ and $\{z_{n}\}$ is bounded. Due to the fact that $x_{n+1}=J_{\gamma A_{1}}z_{n}$ and $x=J_{\gamma A_{1}}z$, the nonexpansivity of $J_{\gamma A_{1}}$ yields $\|x_{n+1}-x_{n}\| \rightarrow 0$ and $\{x_{n}\}$ is bounded. From the identity $ y_{n+1}=z_{n+1}-z_{n}+x_{n+1}$, we deduce that $\|y_{n+1}-y_{n}\| \rightarrow 0$ and $\{y_{n}\}$ is bounded. Hence, we obtain that
\begin{equation}
\lim_{n \rightarrow \infty} \|z_{n}-z\|^2 = \lim_{n \rightarrow \infty} V_{n}
\end{equation}
exists. On the other hand, let $z\in \mathcal{H}$ be a weak cluster point of $\{z_{n}\}$. By the boundedness of $\{x_{k}\}$, it follows that there exists $x\in \mathcal{H}$ such that ($x,z$) is a weak cluster point of $(\{x_{n_{k}}\},\{z_{n_{k}}\})_{k\in N}$. Using the definition of the resolvent operator, (\ref{Algorithm 1}) can be expressed as
\begin{equation}\label{inclusion of the sequence}
\begin{aligned}
& \dbinom{z_{n_k}-z_{n_{k+1}}}{z_{n_k}-z_{n_{k+1}}}-\gamma \dbinom{0}{By_{n_k}-By_{n_{k-1}}}-\gamma \dbinom{0}{By_{n_k}-By_{n_{k+1}}}-\gamma \dbinom{0}{Cy_{n_k}-Cy_{n_{k+1}}}\\
& \in \left(\bm{A}+\bm{B}\right)\dbinom{z_{n_k}-x_{n_{k+1}}}{z_{n_{k+1}}-z_{n_k}+x_{n_{k+1}}},
\end{aligned}
\end{equation}
where $\bm{A}=\begin{bmatrix}
(\gamma A_{1})^{-1}     &  0      \\
0      &  \gamma (A_{2}+B+C)
\end{bmatrix}$ and $\bm{B}=\begin{bmatrix}
0      &  \textrm{-Id}      \\
\textrm{Id}      &  0
\end{bmatrix}$. Since $(\gamma A_{1})^{-1}$ and $\gamma (A_{2}+B+C)$ are maximally monotone by Lemma \ref{lemma-maximally} and \ref{lemma2-maximally}, $\bm{A}+\bm{B}$ is also maximally monotone. Thus, its graph is closed in the weak-strong topology on $\mathcal{H}\times \mathcal{H}$. By taking the limit along the subsequence in (\ref{inclusion of the sequence}), it yields
\begin{equation}
\left\{
\begin{aligned}
& 0\in (\gamma A_{1})^{-1}(z-x)-x\\
& 0\in z-x+\gamma (A_{2}+B+C)x
\end{aligned}
\right.
\Longrightarrow
\left\{
\begin{aligned}
& x=J_{\gamma A_{1}}z\\
& (J_{\gamma A_{1}}-\text{Id})z\in \gamma (A_{2}+B+C)(J_{\gamma A_{1}}z).
\end{aligned}
\right.
\end{equation}
Hence, we deduce that $z\in Z$. According to Lemma \ref{opial-lemma}, it follows that $\{z_{n}\}$ converges weakly to $\bar{z}\in Z$. Let ${x}\in \mathcal{H}$ be a weak cluster point of $\{x_{n}\}$. Taking the limit along the subsequence $(\{x_{n}\},\{z_{n}\})$ of the inclusion $z_{n}-x_{n+1}\in \gamma A_{1}x_{n+1}$ and  yield ${x}=J_{\gamma A_{1}}\bar{z}$. Therefore, the sequence $\{x_{n}\}$ has the unique weak cluster point and converges weakly to $\bar{x}=J_{\gamma A_{1}}\bar{z}$. Finally, Combining the identity $ y_{n+1}=z_{n+1}-z_{n}+x_{n+1}$ and the convergence of the sequence $\{x_{n}\}$ and $\{z_{n}\}$, we derive that $\{y_{n}\}$ converges weakly to a point $\bar{x}\in \mathcal{H}$.
\end{proof}

\begin{remark}
It is worth mentioning that the problem (\ref{problem-four-operator})
can also be solved by applying the Backward-forward-reflected-backward splitting algorithm
(\ref{algorithm2-three-operator}) when $B+C$ in (\ref{Algorithm 1}) is considered as monotone
and $L_{1}$-Lipschitz continuous operator with $L_{1}=L + \frac{1}{\beta}$.
Meanwhile, we derive that the stepsize $\gamma$ in the algorithm (\ref{algorithm2-three-operator})
satisfies $\gamma \in \left(0,\frac{\beta}{8(1+\beta L)}\right)$. Due to the fact that $\frac{\beta}{8(1+\beta L)}<\frac{\beta}{2(1+4\beta L)}$,
the stepsize $\gamma$ in the algorithm (\ref{Algorithm 1}) has a slight improvement.
\end{remark}

%%%%%%%%%%%%%%%%%%%%%%%%%%%%%%%%%%%%%%%%%%%%%%%%%%%%%%%%%%%%%%%%%%%%%%%%%%%%%%%%%%%%%%%%%%%%%%%%%%%%%%%%%%
\subsection{Backward-Semi-Reflected-Forward-Backward splitting algorithm}

We propose the second splitting algorithms as follows.

\begin{equation}\label{Algorithm 2}
\left\{
\begin{aligned}
& x_{n+1}=J_{\gamma A_{1}}z_{n}\\
& y_{n+1}=J_{\gamma A_{2}}(2x_{n+1}-z_{n}-\gamma B(2y_{n}-y_{n-1})-\gamma Cy_{n})\\
& z_{n+1}=z_{n}+y_{n+1}-x_{n+1}
\end{aligned}
\right.
\end{equation}

\begin{remark}
The following iterative algorithms are special cases of (\ref{Algorithm 2}).

(i) Semi-reflected-forward-backward splitting algorithm \cite{Cevher2020SVVA}: if $A_{1}=0$, (\ref{Algorithm 2}) reduces to
\begin{equation}\label{Algorithm 2-special case-1}
 z_{n+1}=J_{\gamma A_{2}}(z_{n}-\gamma B(2z_{n}-z_{n-1})-\gamma Cz_{n}).
\end{equation}

(ii) Backward-reflected-forward-backward splitting algorithm \cite{Rieger2020AMC}: if $C=0$, (\ref{Algorithm 2}) becomes
\begin{equation}\label{Algorithm 2-special case-2}
\left\{
\begin{aligned}
& x_{n+1}=J_{\gamma A_{1}}z_{n}\\
& y_{n+1}=J_{\gamma A_{2}}(2x_{n+1}-z_{n}-\gamma B(2y_{n}-y_{n-1}))\\
& z_{n+1}=z_{n}+y_{n+1}-x_{n+1}.
\end{aligned}
\right.
\end{equation}
\end{remark}

Therefore, we call the iterative algorithm (\ref{Algorithm 2}) a backward-semi-reflected-forward-backward splitting algorithm. For convenience, we first give the following notations.
\begin{equation}
\hat{x}_{n}=2x_{n}-x_{n-1}, \hat{y}_{n}=2y_{n}-y_{n-1}, \hat{z}_{n}=2z_{n}-z_{n-1}
\end{equation}

\begin{lemma}\label{lemma2}
Suppose that there exist $x,z\in \mathcal{H} $ such that $z-x \in \gamma A_{1}x$ and $x-z \in \gamma(A_{2}+B+C)x$. Let the sequences $\emph\{x_{n}\}$, $\emph\{y_{n}\}$ and $\emph\{z_{n}\}$ be defined in $\emph(\ref{Algorithm 2})$. Then, for all $n\in N$, we have
\begin{equation}\label{the result of lemma2}
\begin{aligned}
\|z_{n+1}- & z  \|^2  +  2\gamma\langle B\hat y_{n}-Bx,y_{n+1}-y_{n} \rangle + 2\|z_{n+1}-z_{n}\|^2 + \|z_{n+1}-\hat z_{n}\|^2\\
\leq \|z_{n} & -z \|^2  + 2\gamma\langle B\hat y_{n-1}-Bx,y_{n}-y_{n-1} \rangle + \|z_{n}-z_{n-1}\|^2 + 2\gamma\langle B\hat y_{n}-B\hat y_{n-1},\hat y_{n}-y_{n+1}\rangle \\
& + 2\gamma\langle Cy_{n-1}-Cy_{n},y_{n+1}-y_{n}\rangle + 2\gamma\langle Cy_{n}-Cx,x-y_{n+1}\rangle.
\end{aligned}
\end{equation}
\end{lemma}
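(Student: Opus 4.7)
The plan is to mirror the derivation of Lemma \ref{lemma1}, but with two new ingredients dictated by the reflection $\hat{y}_n$ in the argument of $B$: the monotonicity of $A_2$ applied between the inclusions at two consecutive iterations, and a pair of quadratic identities that link $\|z_{n+1}-\hat{z}_n\|^2$ to the norm differences of successive iterates.

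First I would start from the two resolvent inclusions $z_n-x_{n+1}\in\gamma A_1 x_{n+1}$ and $2x_{n+1}-z_n-y_{n+1}-\gamma B\hat{y}_n-\gamma Cy_n\in\gamma A_2 y_{n+1}$, together with the hypothesized $z-x\in\gamma A_1 x$ and $x-z-\gamma Bx-\gamma Cx\in\gamma A_2 x$, and apply monotonicity at each pair. Combining them via $z_{n+1}-z_n=y_{n+1}-x_{n+1}$ (the same cancellation used in Lemma \ref{lemma1}, in which the $A_1$ contribution kills the awkward inner product arising from $A_2$) would produce the baseline bound
\begin{equation*}
\|z_{n+1}-z\|^2+\|z_{n+1}-z_n\|^2\leq\|z_n-z\|^2+2\gamma\langle B\hat{y}_n-Bx,\,x-y_{n+1}\rangle+2\gamma\langle Cy_n-Cx,\,x-y_{n+1}\rangle.
\end{equation*}

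To reshape this into the target form, I would add $2\gamma\langle B\hat{y}_n-Bx,y_{n+1}-y_n\rangle$ to both sides, collapsing the first $B$-inner product on the right into $2\gamma\langle B\hat{y}_n-Bx,x-y_n\rangle$; monotonicity of $B$ at $(\hat{y}_n,x)$ combined with the identity $\hat{y}_n-y_n=y_n-y_{n-1}$ then upgrades it to $2\gamma\langle B\hat{y}_n-Bx,y_n-y_{n-1}\rangle$. A subsequent splitting $B\hat{y}_n-Bx=(B\hat{y}_n-B\hat{y}_{n-1})+(B\hat{y}_{n-1}-Bx)$, together with $\hat{y}_n-y_{n+1}=(y_n-y_{n-1})-(y_{n+1}-y_n)$, should manufacture both the telescoping term $2\gamma\langle B\hat{y}_{n-1}-Bx,y_n-y_{n-1}\rangle$ and the asymmetric term $2\gamma\langle B\hat{y}_n-B\hat{y}_{n-1},\hat{y}_n-y_{n+1}\rangle$ appearing on the RHS of the target, leaving $2\gamma\langle B\hat{y}_n-B\hat{y}_{n-1},y_{n+1}-y_n\rangle+2\gamma\langle Cy_n-Cy_{n-1},y_{n+1}-y_n\rangle$ as the residue to absorb. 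This residue is precisely what $A_2$-monotonicity between the inclusions at steps $n$ and $n+1$ controls, yielding
\begin{equation*}
\gamma\langle B\hat{y}_n-B\hat{y}_{n-1},y_{n+1}-y_n\rangle+\gamma\langle Cy_n-Cy_{n-1},y_{n+1}-y_n\rangle\leq\langle 2(x_{n+1}-x_n)-(z_n-z_{n-1}),y_{n+1}-y_n\rangle-\|y_{n+1}-y_n\|^2.
\end{equation*}

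To close the argument I would then use the pivotal identity $y_{n+1}-y_n=(x_{n+1}-x_n)+(z_{n+1}-\hat{z}_n)$ (obtained by applying $z_{n+1}-z_n=y_{n+1}-x_{n+1}$ at indices $n+1$ and $n$) together with the expansion $\|z_n-z_{n-1}\|^2+\|z_{n+1}-z_n\|^2-2\langle z_n-z_{n-1},z_{n+1}-z_n\rangle=\|z_{n+1}-\hat{z}_n\|^2$. After substitution, the terms containing $\hat{z}_n$ should cancel exactly and the required inequality should reduce to $-2\langle x_{n+1}-x_n,(z_n-z_{n-1})-(x_{n+1}-x_n)\rangle\leq 0$, which is immediate from monotonicity of $A_1$ applied between $(x_{n+1},z_n-x_{n+1})$ and $(x_n,z_{n-1}-x_n)$ in $\mathrm{gra}(\gamma A_1)$. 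The main obstacle I anticipate is the algebraic bookkeeping in this final step: the extra norm term $\|z_{n+1}-\hat{z}_n\|^2$ placed on the LHS of the target must precisely absorb the combination of $-\|y_{n+1}-y_n\|^2$ and the mixed inner products produced by the $A_2$-monotonicity step, via the two quadratic identities above. Getting all signs and coefficients to line up is the heart of the argument, whereas each of the three monotonicity invocations ($B$, $A_1$, $A_2$) is a one-line application.
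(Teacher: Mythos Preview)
Your proposal is correct and follows essentially the same route as the paper: both use monotonicity of $A_1,A_2$ between the iterate and the fixed pair $(x,z)$ to obtain the baseline inequality, monotonicity of $B$ at $(\hat y_n,x)$ to replace $x-y_{n+1}$ by $\hat y_n-y_{n+1}$ (equivalently your $y_n-y_{n-1}$), the splitting $B\hat y_n-Bx=(B\hat y_n-B\hat y_{n-1})+(B\hat y_{n-1}-Bx)$, and then monotonicity of $A_1$ and $A_2$ between consecutive iterates together with the quadratic identity for $\|z_{n+1}-\hat z_n\|^2$ to close. The only difference is organizational: the paper first isolates the term $2\gamma\langle B\hat y_{n-1}-Bx,\hat y_n-y_{n+1}\rangle$ and bounds it via the consecutive $A_1,A_2$ monotonicities and the identity $\langle z_n-z_{n+1},z_{n+1}-\hat z_n\rangle=\tfrac12(\|z_n-z_{n-1}\|^2-\|z_{n+1}-z_n\|^2-\|z_{n+1}-\hat z_n\|^2)$, whereas you carry the residue $2\gamma\langle B\hat y_n-B\hat y_{n-1},y_{n+1}-y_n\rangle+2\gamma\langle Cy_n-Cy_{n-1},y_{n+1}-y_n\rangle$ and reduce everything to the $A_1$ inequality $\langle x_{n+1}-x_n,(x_{n+1}-x_n)-(z_n-z_{n-1})\rangle\le 0$ at the very end; the two computations are algebraically equivalent.
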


\begin{proof}
Since $A_{1}$ is monotone, we have
\begin{equation}\label{the monotonicity of A1-}
0\leq \langle (z-x)-(z_{n}-x_{n+1}),x-x_{n+1}\rangle.
\end{equation}
Combining the monotonicity of $A_{2}$ and (\ref{the monotonicity of A1}), it follows that
\begin{equation}\label{the monotonicity of A2-}
\begin{aligned}
0\leq \;& \langle (x-z-\gamma (B+C)x)-(2x_{n+1}-z_{n}-y_{n+1}-\gamma B\hat y_{n}-\gamma Cy_{n})),x-y_{n+1}\rangle\\
= \;&\langle (x-z)-(x_{n+1}-z_{n}),x-x_{n+1}\rangle + \langle z_{n+1}-z_{n},z-z_{n+1}\rangle + \gamma\langle B\hat y_{n}-Bx,x-y_{n+1}\rangle\\
& + \gamma\langle Cy_{n}-Cx,x-y_{n+1}\rangle\\
\leq\;& \langle z_{n+1}-z_{n},z-z_{n+1}\rangle + \gamma\langle B\hat y_{n}-Bx,x-y_{n+1}\rangle + \gamma\langle Cy_{n}-Cx,x-y_{n+1}\rangle.
\end{aligned}
\end{equation}
Using the monotonicity of $B$ yields
\begin{equation}\label{the monotonicity of B-}
\begin{aligned}
\gamma\langle B\hat y_{n}-Bx,x-y_{n+1}\rangle &= \gamma\langle B\hat y_{n}-Bx,x-\hat y_{n}\rangle + \gamma\langle B\hat y_{n}-Bx,\hat y_{n}-y_{n+1}\rangle\\
& \leq \gamma\langle B\hat y_{n}-Bx,\hat y_{n}-y_{n+1}\rangle.
\end{aligned}
\end{equation}
By substituting the estimate (\ref{the monotonicity of B-}) into (\ref{the monotonicity of A2-}) and using the identity (\ref{the equality-1}), the inequality (\ref{the monotonicity of A2-}) can be expressed as
\begin{equation}\label{important inequality-1}
\begin{aligned}
0\leq \;&\|z_{n}-z\|^2-\|z_{n+1}-z_{n}\|^2-\|z_{n+1}-z\|^2 + 2\gamma\langle B\hat y_{n}-B\hat y_{n+1},\hat y_{n}-y_{n+1}\rangle \\
&+ 2\gamma\langle B\hat y_{n-1}-Bx,\hat y_{n}-y_{n+1}\rangle + 2\gamma\langle Cy_{n}-Cx,x-y_{n+1}\rangle.
\end{aligned}
\end{equation}
Next, we need to estimate the second-last term in (\ref{important inequality-1}). Since $A_{1}$ is monotone, we obtain
\begin{equation}\label{the monotonicity of A1--}
0\leq \langle (z_{n}-x_{n+1})-(z_{n-1}-x_{n}),x_{n+1}-x_{n}\rangle.
\end{equation}
Combining the monotonicity of $A_{2}$ and (\ref{the monotonicity of A1--}), it follows that
\begin{equation}\label{the monotonicity of A2--}
\begin{aligned}
0\leq \;& \langle (2x_{n+1}-z_{n}-y_{n+1}-\gamma B\hat y_{n}-\gamma Cy_{n})-(2x_{n}-z_{n-1}-y_{n}-\gamma B\hat y_{n-1}-\gamma Cy_{n-1}),y_{n+1}-y_{n}\rangle\\
= \;&\langle (x_{n+1}-z_{n+1}-\gamma B\hat y_{n}-\gamma Cy_{n})-(x_{n}-z_{n}-\gamma B\hat y_{n-1}-\gamma Cy_{n-1}),y_{n+1}-y_{n}\rangle \\
=\;& \langle (x_{n+1}-z_{n})-(x_{n}-z_{n-1}),x_{n+1}-x_{n}\rangle + \langle z_{n}-z_{n+1},z_{n+1}-\hat z_{n}\rangle + \gamma\langle Bx-B\hat y_{n},y_{n+1}-y_{n}\rangle \\
& + \gamma\langle B\hat y_{n-1}-Bx,y_{n}-y_{n-1}\rangle + \gamma\langle B\hat y_{n-1}-Bx,y_{n+1}-\hat y_{n}\rangle  + \gamma\langle Cy_{n-1}-Cy_{n},y_{n+1}-y_{n}\rangle\\
\leq\;&  \langle z_{n}-z_{n+1},z_{n+1}-\hat z_{n}\rangle + \gamma\langle Bx-B\hat y_{n},y_{n+1}-y_{n}\rangle + \gamma\langle B\hat y_{n-1}-Bx,y_{n}-y_{n-1}\rangle\\
& + \gamma\langle B\hat y_{n-1}-Bx,y_{n+1}-\hat y_{n}\rangle  + \gamma\langle Cy_{n-1}-Cy_{n},y_{n+1}-y_{n}\rangle.
\end{aligned}
\end{equation}
By using the  identity
\begin{equation}
\langle z_{n}-z_{n+1},z_{n+1}-\hat z_{n}\rangle = \frac{1}{2}(\|z_{n}-z_{n-1}\|^2 - \|z_{n+1}-z_{n}\|^2 - \|z_{n+1}-\hat z_{n}\|^2 ),
\end{equation}
and reorganizing, we get
\begin{equation}\label{estimate-3}
\begin{aligned}
2\gamma\langle B\hat y_{n-1}-Bx,\hat y_{n}-y_{n+1}\rangle \leq\;& \|z_{n}-z_{n-1}\|^2 - \|z_{n+1}-z_{n}\|^2 - \|z_{n+1}-\hat z_{n}\|^2  + 2\gamma\langle Bx-B\hat y_{n},y_{n+1}-y_{n}\rangle \\
& + 2\gamma\langle B\hat y_{n-1}-Bx,y_{n}-y_{n-1}\rangle   + 2\gamma\langle Cy_{n-1}-Cy_{n},y_{n+1}-y_{n}\rangle.
\end{aligned}
\end{equation}
Substituting the estimate (\ref{estimate-3}) into (\ref{important inequality-1}), the claimed inequality (\ref{the result of lemma2}) holds.
\end{proof}

\begin{theorem}\label{Theorem 2}
Let $A_{i}:\mathcal{H}\rightarrow 2^{\mathcal{H}}$, $i=1,2$ be maximally monotone, let $B:\mathcal{H}\rightarrow \mathcal{H}$ be monotone and $L$-Lipschitz continuous, let $C:\mathcal{H}\rightarrow \mathcal{H}$ be $\beta$-cocoercive, and assume that $zer(A_{1}+A_{2}+B+C)\neq \emptyset$. Let $\gamma \in \left(0,\frac{\beta}{5+(10+\frac{6}{a})\beta L}\right)$, where $a=\frac{17\beta L+10+\sqrt{(17\beta L+10)^2+144\beta^2 L^2}}{6\beta L}$. Let $z_{0}$, $y_{0}$, $y_{-1}\in \mathcal{H}$ and consider the sequences $\{x_{n}\}$, $\{y_{n}\}$, $\{z_{n}\}$ defined in \emph{(\ref{Algorithm 2})}. Then, for all $n\in N$, the following hold:\\
\emph{(i)} The sequence $\{z_{n}\}$ converges weakly to a point $\bar{z}\in \mathcal{H}$.\\
\emph{(ii)} The sequences $\{x_{n}\}$, $\{y_{n}\}$ converge weakly to a point $\bar{x}\in \mathcal{H}$.\\
\emph{(iii)} $\bar{x}=J_{\gamma A_{1}}(\bar{z})\in zer(A_{1}+A_{2}+B+C)$.
\end{theorem}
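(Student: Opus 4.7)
The plan is to mirror the proof of Theorem \ref{Theorem 1}, adapted to the tighter estimates demanded by Algorithm 2. I would first define the set
\begin{equation*}
Z := \{z \in \mathcal{H} : J_{\gamma A_{1}} z \in zer(A_{1}+A_{2}+B+C),\ (J_{\gamma A_{1}} - \text{Id})z \in \gamma(A_{2}+B+C)(J_{\gamma A_{1}}z)\},
\end{equation*}
and verify $Z \neq \emptyset$ exactly as in Theorem \ref{Theorem 1}, using the hypothesis $zer(A_{1}+A_{2}+B+C)\neq\emptyset$ to produce $x$ and $z$ with $z-x\in \gamma A_{1}x$ and $x-z\in \gamma(A_{2}+B+C)x$, so that Lemma \ref{lemma2} applies along the iterates for every $z\in Z$.

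Starting from inequality (\ref{the result of lemma2}), I would bound its three ``new'' right-hand side terms. First, the term $2\gamma\langle B\hat y_{n}-B\hat y_{n-1},\hat y_{n}-y_{n+1}\rangle$ is handled by Cauchy--Schwarz together with the $L$-Lipschitz continuity of $B$ and the Young-type estimate (\ref{the equality-1}) applied to $\|\hat y_{n}-\hat y_{n-1}\|^2$ and $\|\hat y_{n}-y_{n+1}\|^2$, which converts it into a linear combination of consecutive squared $z$-increments $\|z_{n+1}-z_{n}\|^2$, $\|z_{n}-z_{n-1}\|^2$, $\|z_{n-1}-z_{n-2}\|^2$. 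Second, the term $2\gamma\langle Cy_{n-1}-Cy_{n},y_{n+1}-y_{n}\rangle$ is treated by Young's inequality using the $(1/\beta)$-Lipschitz continuity of $C$ inherited from cocoercivity, yielding again squared $z$-increments. Third, the term $2\gamma\langle Cy_{n}-Cx,x-y_{n+1}\rangle$ is split exactly as in the estimate (\ref{estimate-2}) of Theorem \ref{Theorem 1}, producing the favourable dissipation $-\frac{\gamma}{\varepsilon}(2\beta\varepsilon-\gamma)\|Cy_{n}-Cx\|^2$ plus controlled contributions to $\|z_{n+1}-z_{n}\|^2$ and $\|z_{n}-z_{n-1}\|^2$.

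With these in hand, I would assemble a Lyapunov functional $V_{n}$ that absorbs the legacy cross-term $2\gamma\langle B\hat y_{n-1}-Bx,y_{n}-y_{n-1}\rangle$ together with appropriate multiples of $\|z_{n}-z_{n-1}\|^2$ and $\|z_{n-1}-z_{n-2}\|^2$, so that Lemma \ref{lemma2} combined with the three bounds above telescopes into
\begin{equation*}
V_{n+1} + \varepsilon' \|z_{n+1}-z_{n}\|^2 + \frac{\gamma}{\varepsilon}(2\beta\varepsilon-\gamma)\|Cy_{n}-Cx\|^2 \leq V_{n},
\end{equation*}
for some $\varepsilon'>0$. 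The main obstacle is exactly this coefficient bookkeeping: the Young parameters (a free $\varepsilon>0$ for the cocoercivity splitting and the $a>0$ from (\ref{the equality-1})) must be chosen so that simultaneously $\varepsilon'>0$, $2\beta\varepsilon-\gamma>0$, and a strictly positive lower bound for $V_{n}$ of the form $c\|z_{n+1}-z\|^2$ is retained. The stated bound $\gamma < \frac{\beta}{5+(10+\frac{6}{a})\beta L}$ with $a$ equal to the positive root of a quadratic clearly arises from equating two such competing constraints and then optimizing over $a$, mirroring how equating the two bounds in Theorem \ref{Theorem 1} led to $\varepsilon=\frac{1}{(2+a+1/a)(1+4\beta L)}$ and the choice $a=1$; here the asymmetry introduced by the reflection $\hat y_{n}=2y_{n}-y_{n-1}$ produces a genuine quadratic in $a$.

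Once the energy inequality is established, the rest of the argument follows the template of Theorem \ref{Theorem 1}. Telescoping yields $\sum_{n}\|z_{n+1}-z_{n}\|^2 < \infty$, so $\|z_{n+1}-z_{n}\|\to 0$; the lower bound on $V_{n}$ yields boundedness of $\{z_{n}\}$ and the existence of $\lim_{n\to\infty}\|z_{n}-z\|$ for every $z\in Z$; nonexpansivity of $J_{\gamma A_{1}}$ gives $\|x_{n+1}-x_{n}\|\to 0$, and the identity $y_{n+1}=z_{n+1}-z_{n}+x_{n+1}$ gives $\|y_{n+1}-y_{n}\|\to 0$. Any weak cluster point $(\bar x,\bar z)$ of $(\{x_{n}\},\{z_{n}\})$ is identified as lying in $Z$ by rewriting (\ref{Algorithm 2}) as an inclusion of the same product form as (\ref{inclusion of the sequence}), where the diagonal operator $\bm{A}$ with entries $(\gamma A_{1})^{-1}$ and $\gamma(A_{2}+B+C)$ is maximally monotone by Lemmas \ref{lemma-maximally} and \ref{lemma2-maximally}, so that $\bm{A}+\bm{B}$ has a weak-strong closed graph; passing to the limit along a subsequence yields the inclusions defining $Z$. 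Opial's lemma (Lemma \ref{opial-lemma}) then delivers (i), and (ii) and (iii) follow from the convergence of $\{z_{n}\}$ together with $y_{n+1}=z_{n+1}-z_{n}+x_{n+1}$ and $x_{n+1}=J_{\gamma A_{1}}z_{n}$.
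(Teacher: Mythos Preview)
Your outline follows the right template---same set $Z$, same appeal to Lemma \ref{lemma2}, same Opial closing---but it misses one structural ingredient that is essential for the constant stated in the theorem. In Lemma \ref{lemma2} the left-hand side carries, besides $2\|z_{n+1}-z_n\|^2$, the second-difference term $\|z_{n+1}-\hat z_n\|^2$. The paper does \emph{not} discard this term; instead it bounds $\|\hat y_n-y_{n+1}\|^2$ (and hence $\|\hat y_n-\hat y_{n-1}\|^2$) so that part of the cost lands on $\|z_{n+1}-\hat z_n\|^2$ and $\|z_n-\hat z_{n-1}\|^2$ rather than on consecutive $z$-increments (see (\ref{the inequality-4})--(\ref{estimate-4})). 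The Lyapunov $V_n$ then also contains a multiple of $\|z_n-\hat z_{n-1}\|^2$ (and even $\|z_{n-2}-z_{n-3}\|^2$), and the energy inequality (\ref{energy function-2}) has \emph{three} positivity constraints: one on the $\|z_{n+1}-z_n\|^2$ coefficient, one on the $\|z_{n+1}-\hat z_n\|^2$ coefficient (giving $\gamma<\tfrac{1}{3(1+a)L}$), and the cocoercivity constraint $\gamma<2\beta\varepsilon$. The specific value of $a$ in the statement is obtained by first equating the first and third constraints to fix $\varepsilon$, and then equating the resulting bound with $\tfrac{1}{3(1+a)L}$; this is where the quadratic in $a$ actually comes from.

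Your plan of reducing everything to ``consecutive squared $z$-increments $\|z_{n+1}-z_n\|^2,\|z_n-z_{n-1}\|^2,\|z_{n-1}-z_{n-2}\|^2$'' via (\ref{the equality-1}) throws away the $\|z_{n+1}-\hat z_n\|^2$ dissipation and puts the Young parameter $a$ in the wrong estimate. You would still obtain \emph{some} energy inequality, but with only two competing constraints; optimizing those would not reproduce the bound $\gamma<\tfrac{\beta}{5+(10+6/a)\beta L}$ with the stated $a$. To prove the theorem as written you need to keep the $\hat z$-increments in both the estimates and the Lyapunov, and carry out the three-way balance as above.
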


\begin{proof}
Consider the nonempty set $Z$ defined as (\ref{definition of Z}). Let $z\in Z$ and define $x\,:=J_{\gamma A_{1}}z$. By Lemma \ref{lemma2}, for all $n\in N$, we have
\begin{equation}\label{the result of lemma2-}
\begin{aligned}
\|z_{n+1}- & z  \|^2  +  2\gamma\langle B\hat y_{n}-Bx,y_{n+1}-y_{n} \rangle + 2\|z_{n+1}-z_{n}\|^2 + \|z_{n+1}-\hat z_{n}\|^2\\
\leq \|z_{n} & -z \|^2  + 2\gamma\langle B\hat y_{n-1}-Bx,y_{n}-y_{n-1} \rangle + \|z_{n}-z_{n-1}\|^2 + 2\gamma\langle B\hat y_{n}-B\hat y_{n-1},\hat y_{n}-y_{n+1}\rangle \\
& + 2\gamma\langle Cy_{n-1}-Cy_{n},y_{n+1}-y_{n}\rangle + 2\gamma\langle Cy_{n}-Cx,x-y_{n+1}\rangle.
\end{aligned}
\end{equation}
From the firm nonexpansivity of $J_{\gamma A_{1}}$, it follows that
\begin{equation}\label{the inequality-2}
\begin{aligned}
\|(\hat z_{n-1}-\hat x_{n})-(z_{n}-x_{n+1})\|^2 & \leq 2\|(z_{n-1}-x_{n})-(z_{n}-x_{n+1})\|^2 + 2\|(z_{n-1}-x_{n})-(z_{n-2}-x_{n-1})\|^2\\
& \leq 2\|z_{n}-z_{n-1}\|^2 + 2\|z_{n-1}-z_{n-2}\|^2,
\end{aligned}
\end{equation}
and
\begin{equation}\label{the inequality-3}
\begin{aligned}
\|y_{n}-y_{n-1}\|^2 \;& = \|(z_{n}-z_{n-1}+x_{n})-(z_{n-1}-z_{n-2}+x_{n-1})\|^2\\
& \leq 2\|z_{n}-z_{n-1}\|^2 + 2\|(x_{n}-z_{n-1})-(x_{n-1}-z_{n-2})\|^2\\
& \leq 2\|z_{n}-z_{n-1}\|^2 + 2\|z_{n-1}-z_{n-2}\|^2.
\end{aligned}
\end{equation}
By applying Young's inequality and (\ref{the inequality-2}), we obtain
\begin{equation}\label{the inequality-4}
\begin{aligned}
\|\hat y_{n}-y_{n+1}\|^2 &= \|(\hat z_{n}-\hat z_{n-1}+\hat x_{n})-(z_{n+1}-z_{n}+x_{n+1})\|^2\\
& \leq (1+a)\|z_{n+1}-\hat z_{n}\|^2 + (1+\frac{1}{a})\|(\hat z_{n-1}-\hat x_{n})-(z_{n}-x_{n+1})\|^2\\
& \leq (1+a)\|z_{n+1}-\hat z_{n}\|^2 + 2(1+\frac{1}{a})(\|z_{n}-z_{n-1}\|^2 + \|z_{n-1}-z_{n-2}\|^2)\\
\end{aligned}
\end{equation}
for some $a>0$. Combining (\ref{the inequality-3}) and (\ref{the inequality-4}), we derive that
\begin{equation}\label{the inequality-5}
\begin{aligned}
\|\hat y_{n}-\hat y_{n-1}\|^2 & \leq 2\|y_{n}-y_{n-1}\|^2 + 2\|y_{n}-\hat y_{n-1}\|^2\\
& \leq 4\|z_{n}-z_{n-1}\|^2 + 4(2+\frac{1}{a})\|z_{n-1}-z_{n-2}\|^2 + 4(1+\frac{1}{a})\|z_{n-2}-z_{n-3}\|^3 + 2(1+a)\|z_{n}-\hat z_{n-1}\|^2.
\end{aligned}
\end{equation}
By using the inequality (\ref{the inequality-4}) and (\ref{the inequality-5}), it gives
\begin{equation}\label{estimate-4}
\begin{aligned}
& 2\gamma\langle B\hat y_{n}-B\hat y_{n-1},\hat y_{n}-y_{n+1}\rangle \\
\leq &  \gamma L\|\hat y_{n}-\hat y_{n-1}\|^2 + \gamma L\|\hat y_{n}-y_{n+1}\|^2\\
\leq & 2(3+\frac{1}{a})\gamma L\|z_{n}-z_{n-1}\|^2 + 2(5+\frac{3}{a})\gamma L\|z_{n-1}-z_{n-2}\|^2 + 4(1+\frac{1}{a})\gamma L\|z_{n-2}-z_{n-3}\|^3 \\
&\quad + (1+a)\gamma L\|z_{n+1}-\hat z_{n}\|^2+ 2(1+a)\gamma L\|z_{n}-\hat z_{n-1}\|^2.
\end{aligned}
\end{equation}
Note that $C$ is $\beta$-cocoercive, thus $C$ is $\frac{1}{\beta}$-Lipschitz. By the Lipschitz property of $C$ and (\ref{the inequality-3}), the second-last term in (\ref{the result of lemma2-}) can be estimated by
\begin{equation}\label{estimate-5}
\begin{aligned}
2\gamma\langle Cy_{n-1}-Cy_{n},y_{n+1}-y_{n}\rangle & \leq \frac{\gamma}{\beta}(\|y_{n+1}-y_{n}\|^2 + \|y_{n}-y_{n-1}\|^2)\\
& \leq \frac{\gamma}{\beta}(2\|z_{n+1}-z_{n}\|^2 + 4\|z_{n}-z_{n-1}\|^2 + 2\|z_{n-1}-z_{n-2}\|^2).
\end{aligned}
\end{equation}
Using the cocoercivity of $C$ and (\ref{the inequality-3}), the last term in (\ref{the result of lemma2-}) can be estimated by
\begin{equation}\label{estimate-6}
\begin{aligned}
2\gamma\langle Cy_{n}-Cx,x-y_{n+1}\rangle = &\; 2\gamma\langle Cy_{n}-Cx,x-y_{n}\rangle + 2\varepsilon\langle \frac{\gamma}{\varepsilon}(Cy_{n}-Cx),y_{n}-y_{n+1}\rangle  \\
\leq & -2\gamma\beta\|Cy_{n}-Cx\|^2  + \frac{\gamma^2}{\varepsilon}\|Cy_{n}-Cx\|^2 + \varepsilon\|y_{n+1}-y_{n}\|^2\\
& \quad - \varepsilon\|\frac{\gamma}{\varepsilon}(Cy_{n}-Cx)-(y_{n}-y_{n+1})\|^2 \\
 \leq &\; \varepsilon\|y_{n+1}-y_{n}\|^2 - \frac{\gamma}{\varepsilon}(2\beta\varepsilon-\gamma)\|Cy_{n}-Cx\|^2 \\
 \leq &\; 2\varepsilon\|z_{n+1}-z_{n}\|^2 + 2\varepsilon\|z_{n}-z_{n-1}\|^2 - \frac{\gamma}{\varepsilon}(2\beta\varepsilon-\gamma)\|Cy_{n}-Cx\|^2.
\end{aligned}
\end{equation}
Substituting the estimates (\ref{estimate-4}), (\ref{estimate-5}) and (\ref{estimate-6}) into (\ref{the result of lemma2-}), it follows that
\begin{equation}\label{tranformed result of lemma2}
\begin{aligned}
& \|z_{n+1} - z  \|^2  +  2\gamma\langle B\hat y_{n}-Bx,y_{n+1}-y_{n} \rangle + 2\|z_{n+1}-z_{n}\|^2 + \|z_{n+1}-\hat z_{n}\|^2\\
\leq & \|z_{n}  -z \|^2  + 2\gamma\langle B\hat y_{n-1}-Bx,y_{n}-y_{n-1} \rangle + (2\varepsilon+\frac{2\gamma}{\beta})\|z_{n+1}-z_{n}\|^2 + (1+2\varepsilon+\frac{4\gamma}{\beta}+(6+\frac{2}{a})\gamma L)\|z_{n}-z_{n-1}\|^2 \\
&\quad + (\frac{2\gamma}{\beta}+(10+\frac{6}{a})\gamma L)\|z_{n-1}-z_{n-2}\|^2 + 4(1+\frac{1}{a})\gamma L\|z_{n-2}-z_{n-3}\|^3 + (1+a)\gamma L\|z_{n+1}-\hat z_{n}\|^2\\
&\qquad + 2(1+a)\gamma L\|z_{n}-\hat z_{n-1}\|^2 - \frac{\gamma}{\varepsilon}(2\beta\varepsilon-\gamma)\|Cy_{n}-Cx\|^2.
\end{aligned}
\end{equation}
For convenience, we denote
\begin{equation}
\begin{aligned}
V_{n}\; := &  \|z_{n}  -z \|^2  + 2\gamma\langle B\hat y_{n-1}-Bx,y_{n}-y_{n-1} \rangle + (1+2\varepsilon+\frac{6\gamma}{\beta}+(20+\frac{12}{a})\gamma L)\|z_{n}-z_{n-1}\|^2 \\
&\quad + (\frac{2\gamma}{\beta}+(14+\frac{10}{a})\gamma L)\|z_{n-1}-z_{n-2}\|^2 + 4(1+\frac{1}{a})\gamma L\|z_{n-2}-z_{n-3}\|^2 + 2(1+a)\gamma L\|z_{n}-\hat z_{n-1}\|^2.
\end{aligned}
\end{equation}
Hence, (\ref{tranformed result of lemma2}) can be simply expressed as
\begin{equation}\label{energy function-2}
V_{n+1} + (1-4\varepsilon-\frac{8\gamma}{\beta}-(20+\frac{12}{a})\gamma L)\|z_{n+1}-z_{n}\|^2 + (1-3(1+a)\gamma L)\|z_{n}-\hat z_{n-1}\|^2 + \frac{\gamma}{\varepsilon}(2\beta\varepsilon-\gamma)\|Cy_{n}-Cx\|^2 \leq V_{n}.
\end{equation}
In order to ensure that the sequence $\{V_{n}\}$ is nonincreasing, the second, third and fourth terms in (\ref{energy function-2}) shall be positive, from which we derive three upper bounds of $\gamma$: $\gamma< \frac{1-4\varepsilon}{(20+\frac{12}{a})L+\frac{8}{\beta}}$, $\gamma< \frac{1}{3(1+a)L}$ and $\gamma<2\beta\varepsilon$. To get the largest interval for $\gamma$, first taking the first and third bound equal yields $\varepsilon=\frac{1}{5+(10+\frac{6}{a})\beta L}>0$ and then setting the last two bounds equal yields $a=\frac{17\beta L+10+\sqrt{(17\beta L+10)^2+144\beta^2 L^2}}{6\beta L}>0$. Therefore, we obtain the desired result of $\gamma$.
In addition, (\ref{energy function-2}) gives
\begin{equation}\label{energy function-2-}
V_{n+1} + \varepsilon' \|z_{n+1}-z_{n}\|^2 \leq V_{n},
\end{equation}
which implies that
\begin{equation}\label{sum of energy function-2}
V_{n+1} + \varepsilon' \sum_{i=0}^{n}\|z_{i+1}-z_{i}\|^2 \leq V_{0},
\end{equation}
where $\varepsilon' =1-4\varepsilon-\frac{8\gamma}{\beta}-(20+\frac{12}{a})\gamma L$ and the value of $a$, $\varepsilon$ as the above. Next, we need to estimate the lower bound of the sequence $\{V_{n+1}\}$. From the firm nonexpansivity of $J_{\gamma A_{1}}$, it follows  that
\begin{equation}
\begin{aligned}
\|\bar y_{n}-x\|^2 &= \|(\hat z_{n}-\hat z_{n-1}+\hat x_{n})-(z-z+x)\|^2 \\
& \leq 2\|\hat z_{n}-z\|^2 + 2\|(\hat x_{n}-\hat z_{n-1})-(x-z)\|^2 \\
& \leq 2\|(\hat z_{n}-z_{n+1})+(z_{n+1}-z)\|^2 + 2\|\hat z_{n-1}-z\|^2 \\
& \leq 4\|z_{n+1}-z\|^2 + 4\|z_{n+1}-\hat z_{n}\|^2 + 4\| z_{n-1}-z\|^2 + 4\|z_{n-1}-z_{n-2}\|^2,
\end{aligned}
\end{equation}
which combined the inequality
\begin{equation}
\begin{aligned}
\|z_{n-1}-z\|^2 & = \|(z_{n+1}-z)-(z_{n+1}-z_{n}+z_{n}-z_{n-1})\|^2 \\
& \leq  2\|z_{n+1}-z\|^2 + 2\|(z_{n+1}-z_{n})+(z_{n}-z_{n-1})\|^2 \\
& \leq  2\|z_{n+1}-z\|^2 + 4\|z_{n+1}-z_{n}\|^2 +4\|z_{n}-z_{n-1}\|^2,
\end{aligned}
\end{equation}
it implies
\begin{equation}\label{the inequality-6}
\begin{aligned}
& \|\hat y_{n}-x\|^2  \leq 12\|z_{n+1}-z\|^2 + 16\|z_{n+1}- z_{n}\|^2 + 16\|z_{n}-z_{n-1}\|^2 + 4\|z_{n-1}-z_{n-2}\|^2 + 4\|z_{n+1}-\hat z_{n}\|^2.
\end{aligned}
\end{equation}
Hence, by using the Lipschitz property of $B$, Young's inequality, (\ref{the inequality-3}) and (\ref{the inequality-6}), we conclude that
\begin{equation}
\begin{aligned}
& 2\gamma\langle B\hat y_{n}-Bx,y_{n+1}-y_{n} \rangle \\
& \leq \frac{1}{2}\gamma L\|\hat y_{n}-x\|^2 + 2\gamma L\|y_{n+1}-y_{n}\|^2 \\
& \leq 6\gamma L\|z_{n+1}-z\|^2 + 12\gamma L\|z_{n+1}- z_{n}\|^2 + 12\gamma L\|z_{n}-z_{n-1}\|^2 + 2\gamma L\|z_{n-1}-z_{n-2}\|^2 + 2\gamma L\|z_{n+1}-\hat z_{n}\|^2.
\end{aligned}
\end{equation}
which yields
\begin{equation}\label{lower bound of Vn-2}
\begin{aligned}
V_{n+1} &\geq  (1-6\gamma L) \|z_{n+1}  -z \|^2  + (1+2\varepsilon+\frac{6\gamma}{\beta}+(8+\frac{12}{a})\gamma L)\|z_{n+1}-z_{n}\|^2 \\
&\quad + (\frac{2\gamma}{\beta}+(2+\frac{10}{a})\gamma L)\|z_{n}-z_{n-1}\|^2 + (2+\frac{4}{a})\gamma L\|z_{n-1}-z_{n-2}\|^3 + 2a\gamma L\|z_{n+1}-\hat z_{n}\|^2 \\
& \geq (4\varepsilon+\frac{8\gamma}{\beta}+(14+\frac{12}{a})\gamma L)\|z_{n+1}  -z \|^2 \geq 0.
\end{aligned}
\end{equation}
Consequently, $\{V_{n}\}$ converges, together with (\ref{sum of energy function-2}), (\ref{lower bound of Vn-2}), which implies $\|z_{n+1}-z_{n}\| \rightarrow 0$ and $\{z_{n}\}$ is bounded. Due to the fact that the first and third lines of (\ref{Algorithm 1}) and (\ref{Algorithm 2}) is identical, we deduce that $\|x_{n+1}-x_{n}\| \rightarrow 0$ and $\{x_{n}\}$ is bounded, $\|y_{n+1}-y_{n}\| \rightarrow 0$ and $\{y_{n}\}$ is bounded. Hence, we obtain that
\begin{equation}
\lim_{n \rightarrow \infty} \|z_{n}-z\|^2 = \lim_{n \rightarrow \infty} V_{n}
\end{equation}
exists. On the other hand, (\ref{Algorithm 2}) can be expressed as
\begin{equation}
\begin{aligned}
& \dbinom{z_{n_k}-z_{n_{k+1}}}{z_{n_k}-z_{n_{k+1}}}-\gamma \dbinom{0}{By_{n_{k+1}}-B\hat y_{n_{k}}}-\gamma \dbinom{0}{Cy_{n_k}-Cy_{n_{k+1}}} \in \left(\bm{A}+\bm{B}\right)\dbinom{z_{n_k}-x_{n_{k+1}}}{z_{n_{k+1}}-z_{n_k}+x_{n_{k+1}}},
\end{aligned}
\end{equation}
where $\bm{A}=\begin{bmatrix}
(\gamma A_{1})^{-1}     &  0      \\
0      &  \gamma (A_{2}+B+C)
\end{bmatrix}$ and $\bm{B}=\begin{bmatrix}
0      &  \textrm{-Id}      \\
\textrm{Id}      &  0
\end{bmatrix}$.
The remainder of the proof is analogous to Theorem \ref{Theorem 1}.
\end{proof}

%%%%%%%%%%%%%%%%%%%%%%%%%%%%%%%%%%%%%%%%%%%%%%%%%%%%%%%%%%%%%%%%%%%%%%%%%%%%%%%%%%%%%%%%%%%%%%%%%%%%%%%%%%%%%%%%
\subsection{Semi-Forward-Reflected-Douglas-Rachford splitting algorithm}

The third splitting algorithm is presented below.

\begin{equation}\label{Algorithm 3}
\left\{
\begin{aligned}
& x_{n+1}=J_{\gamma A_{2}}(x_{n}-\gamma u_{n}-\gamma(2Bx_{n}-Bx_{n-1})-\gamma Cx_{n})\\
& y_{n+1}=J_{\lambda A_{1}}(2x_{n+1}-x_{n}+\lambda u_{n})\\
& u_{n+1}=u_{n}+\frac{1}{\lambda}(2x_{n+1}-x_{n}-y_{n+1})
\end{aligned}
\right.
\end{equation}

\begin{remark}
The following iterative algorithms are special cases of (\ref{Algorithm 3}).

(i) Semi-forward-reflected-backward splitting algorithm \cite{Malitsky2020SIAMJO}: if $A_{1}=0$, (\ref{Algorithm 3}) reduces to
\begin{equation}\label{Algorithm 3-special case-1}
 z_{n+1}=J_{\gamma A_{2}}(z_{n}-2\gamma Bz_{n}+\gamma Bz_{n-1}-\gamma Cz_{n}).
\end{equation}

(ii) Forward-reflected-Douglas-Rachford splitting algorithm \cite{Ryu2020JOTA}: if $C=0$, (\ref{Algorithm 3}) becomes
\begin{equation}\label{Algorithm 3-special case-2}
\left\{
\begin{aligned}
& x_{n+1}=J_{\gamma A_{2}}(x_{n}-\gamma u_{n}-\gamma(2Bx_{n}-Bx_{n-1}))\\
& y_{n+1}=J_{\lambda A_{1}}(2x_{n+1}-x_{n}+\lambda u_{n})\\
& u_{n+1}=u_{n}+\frac{1}{\lambda}(2x_{n+1}-x_{n}-y_{n+1})
\end{aligned}
\right.
\end{equation}
\end{remark}

Therefore, we call the iterative algorithm (\ref{Algorithm 3}) a semi-forward-reflected-Douglas-Rachford splitting algorithm.

\begin{theorem}\label{Theorem 3}
Let $A_{i}:\mathcal{H}\rightarrow 2^{\mathcal{H}}$, $i=1,2$ be maximally monotone, let $B:\mathcal{H}\rightarrow \mathcal{H}$ be monotone and $L$-Lipschitz continuous, let $C:\mathcal{H}\rightarrow \mathcal{H}$ be $\beta$-cocoercive, and assume that $zer(A_{1}+A_{2}+B+C)\neq \emptyset$. Let $\lambda >0$ and $\gamma \in \left(0,\frac{\lambda\beta}{\beta+\lambda(2\beta L+1)}\right)$. Let $x_{0}$, $x_{-1}$, $u_{0}\in \mathcal{H}$ and consider the sequences $\{x_{n}\}$, $\{u_{n}\}$ defined in \emph{(\ref{Algorithm 3})}. Then, for all $n\in N$, the sequences $\{x_{n}\}$ converges weakly to a point $\bar{x}\in zer(A_{1}+A_{2}+B+C)$.
\end{theorem}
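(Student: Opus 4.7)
The plan is to adapt the convergence analysis of the Forward-reflected-Douglas-Rachford splitting algorithm of Ryu and V\~u and incorporate the extra cocoercive operator $C$, in the same spirit as the analyses of Theorems \ref{Theorem 1} and \ref{Theorem 2}. First I would characterize a solution pair: since $\mathrm{zer}(A_{1}+A_{2}+B+C)\neq\emptyset$, there exist $\bar{x}\in\mathcal{H}$ and $\bar{u}\in\mathcal{H}$ with $\bar{u}\in A_{1}\bar{x}$ and $-\bar{u}-B\bar{x}-C\bar{x}\in A_{2}\bar{x}$; these will play the role of a fixed point of the iteration. From the resolvent definitions in \eqref{Algorithm 3} I would read off the two inclusions
\begin{equation*}
\tfrac{1}{\gamma}(x_{n}-x_{n+1})-u_{n}-(2Bx_{n}-Bx_{n-1})-Cx_{n}\in A_{2}x_{n+1},\qquad u_{n+1}\in A_{1}y_{n+1},
\end{equation*}
where the second inclusion follows from combining the second and third lines of \eqref{Algorithm 3}.

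Next I would pair these inclusions against $\bar{x}\in A_{2}^{-1}(-\bar{u}-B\bar{x}-C\bar{x})$ and $\bar{x}\in A_{1}^{-1}\bar{u}$ and invoke monotonicity of $A_{1}$ and $A_{2}$, monotonicity of $B$, and $\beta$-cocoercivity of $C$. After applying the three-point identity $\langle a-b,b-c\rangle=\tfrac12(\|a-c\|^{2}-\|a-b\|^{2}-\|b-c\|^{2})$ to the resulting cross terms in $\|x_{n+1}-\bar{x}\|^{2}$ and $\lambda^{2}\|u_{n+1}-\bar{u}\|^{2}$ (this is the step mirroring the Ryu--V\~u computation), I would obtain a one-step inequality in which the ``reflected'' forward-evaluation term $\gamma\langle Bx_{n}-Bx_{n-1},\cdot\rangle$ appears on both sides but with shifted indices, so that it can be absorbed into a telescoping quantity $2\gamma\langle Bx_{n}-Bx_{n-1},x_{n}-\bar{x}\rangle$. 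The cocoercive contribution $-2\gamma\langle Cx_{n}-C\bar{x},x_{n+1}-\bar{x}\rangle$ would be split as in \eqref{estimate-2}, namely using $\langle Cx_{n}-C\bar{x},x_{n+1}-\bar{x}\rangle=\langle Cx_{n}-C\bar{x},x_{n}-\bar{x}\rangle+\langle Cx_{n}-C\bar{x},x_{n+1}-x_{n}\rangle$, then applying cocoercivity to the first piece and Young's inequality with a free parameter $\varepsilon>0$ to the second.

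From here I would define an energy functional of the form
\begin{equation*}
V_{n}:=\|x_{n}-\bar{x}\|^{2}+\lambda^{2}\|u_{n}-\bar{u}\|^{2}+2\gamma\langle Bx_{n-1}-B\bar{x},x_{n}-\bar{x}\rangle+c\|x_{n}-x_{n-1}\|^{2}
\end{equation*}
for a suitable constant $c>0$ depending on $\gamma,\lambda,L,\beta,\varepsilon$, and try to establish a descent inequality of the form $V_{n+1}+\rho\|x_{n+1}-x_{n}\|^{2}+\tfrac{\gamma}{\varepsilon}(2\beta\varepsilon-\gamma)\|Cx_{n}-C\bar{x}\|^{2}\le V_{n}$ with $\rho>0$. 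The feasibility of $\rho>0$ and of the coefficient of $\|Cx_{n}-C\bar{x}\|^{2}$ will impose two upper bounds on $\gamma$; optimizing $\varepsilon$ to equalize them should reproduce exactly the stated interval $\gamma\in(0,\lambda\beta/(\beta+\lambda(2\beta L+1)))$. A lower bound $V_{n}\ge 0$ would be obtained by using $L$-Lipschitzness of $B$ and Young's inequality to absorb the cross term, exactly as in \eqref{lower bound of Vn}.

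The main obstacle is calibrating the two free parameters (the constant $c$ in front of $\|x_{n}-x_{n-1}\|^{2}$ and the Young parameter $\varepsilon$ for the cocoercive estimate) so that the two required upper bounds on $\gamma$ collapse to the sharp value stated in the theorem; this is a bookkeeping issue but it is where all of the technical work sits. Once the descent inequality is in place, standard arguments conclude: summability gives $\|x_{n+1}-x_{n}\|\to 0$ and boundedness of $\{x_{n}\},\{u_{n}\}$; the identities in \eqref{Algorithm 3} then force $\|y_{n+1}-x_{n+1}\|\to 0$ and $\lambda\|u_{n+1}-u_{n}\|\to 0$; existence of $\lim_{n}\|x_{n}-\bar x\|^{2}$ follows from convergence of $V_{n}$; and passing to the limit in the two inclusions above along any weak cluster point $(x^{*},u^{*})$ of $(\{x_{n}\},\{u_{n}\})$, using the maximal monotonicity of the product operator $A_{1}\times(A_{2}+B+C)$ (guaranteed by Lemmas \ref{lemma-maximally} and \ref{lemma2-maximally}) and weak-strong closedness of its graph, shows $u^{*}\in A_{1}x^{*}$ and $-u^{*}-Bx^{*}-Cx^{*}\in A_{2}x^{*}$, hence $x^{*}\in\mathrm{zer}(A_{1}+A_{2}+B+C)$. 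Opial's Lemma \ref{opial-lemma} then delivers the weak convergence of $\{x_{n}\}$ to some $\bar{x}\in\mathrm{zer}(A_{1}+A_{2}+B+C)$.
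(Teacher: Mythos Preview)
Your high-level plan (extract the two inclusions from the resolvent steps, use monotonicity of $A_{1},A_{2},B$ and cocoercivity of $C$, build a Lyapunov functional with a telescoping reflected term, then finish with demiclosedness and Opial) is exactly the skeleton of the paper's proof. The substantive difference is in the metric you choose for the product variable $(x,u)$, and this is where your proposal would not go through as written.

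You propose the energy $\|x_{n}-\bar x\|^{2}+\lambda^{2}\|u_{n}-\bar u\|^{2}+\cdots$, i.e.\ the standard diagonal product norm. The paper instead works in the skewed inner product space $\mathcal{K}=\mathcal{H}\times\mathcal{H}$ with
\[
\|(x,u)\|_{\mathcal{K}}^{2}=\tfrac{1}{\gamma}\|x\|^{2}-2\langle x,u\rangle+\lambda\|u\|^{2},
\]
and takes $V_{n}=\|(x_{n},u_{n})-(\bar x,\bar u)\|_{\mathcal{K}}^{2}+2\langle Bx_{n}-Bx_{n-1},\bar x-x_{n}\rangle+\tfrac{1}{2}\|(x_{n},u_{n})-(x_{n-1},u_{n-1})\|_{\mathcal{K}}^{2}$. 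This metric is not cosmetic: it is engineered so that the algebra produces the \emph{exact} identity
\[
\|(x_{n+1},u_{n+1})-(\bar x,\bar u)\|_{\mathcal{K}}^{2}=\|(x_{n},u_{n})-(\bar x,\bar u)\|_{\mathcal{K}}^{2}-\|(x_{n+1},u_{n+1})-(x_{n},u_{n})\|_{\mathcal{K}}^{2}-2\langle\cdot,\cdot\rangle_{A_{2}}-2\langle\cdot,\cdot\rangle_{A_{1}}+\text{($B,C$ terms)},
\]
in which the $A_{1}$ and $A_{2}$ monotonicity terms appear cleanly and the primal--dual cross terms $\langle x_{n+1}-\bar x,u_{n}-\bar u\rangle$ are absorbed. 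With your diagonal norm those cross terms do not cancel, the ``three-point identity'' step you describe does not mirror the Ryu--V\~u computation (which already uses the $\mathcal{K}$-norm), and you will not be able to tune $c$ and $\varepsilon$ to recover the stated bound $\gamma<\lambda\beta/(\beta+\lambda(2\beta L+1))$. In the paper the two constraints that fix $\gamma$ come from $\gamma<\lambda/(1+2\lambda(L+\varepsilon))$ and $\varepsilon>1/(2\beta)$, obtained precisely because the residual in the $\mathcal{K}$-norm dominates $(1/\gamma-1/\lambda)\|x_{n+1}-x_{n}\|^{2}$; the factor $1/\gamma-1/\lambda$ is invisible in your diagonal norm. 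A secondary point: the reflected term you put in $V_{n}$, namely $2\gamma\langle Bx_{n-1}-B\bar x,\,x_{n}-\bar x\rangle$, does not vanish as $n\to\infty$ (it merely stays bounded), so you could not conclude that $\lim_{n}\|x_{n}-\bar x\|$ exists from convergence of $V_{n}$; the paper's choice $2\langle Bx_{n}-Bx_{n-1},\bar x-x_{n}\rangle$ does vanish because $\|x_{n}-x_{n-1}\|\to 0$. Finally, Opial is applied in the paper to $\{(x_{n},u_{n})\}$ in the $\mathcal{K}$-norm, not to $\{x_{n}\}$ alone.
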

\begin{proof}
Let us first define the Hilbert space $\mathcal{K}=\mathcal{H}\times\mathcal{H}$ with inner product and associated norm
\begin{equation}
\begin{aligned}
\langle (x,u),(y,v)\rangle_{\mathcal{K}} &\;:=\frac{1}{\gamma}\langle x,y\rangle -\langle x,v\rangle -\langle y,u\rangle + \lambda\langle u,v\rangle, \\
\|(x,u)\|^2_{\mathcal{K}} &\;:=\frac{1}{\gamma}\|x\|^2 - 2\langle x,u\rangle + \lambda\|u\|^2.
\end{aligned}
\end{equation}
The inner product and norm are proved to be valid by simple calculation.

Let $\bar x\in zer(A_{1}+A_{2}+B+C)$ and $\bar u\in A_{1}\bar x$. Combining these two inclusions yields $-\bar u\in (A_{2}+B+C)\bar x$. Next, we define
\begin{equation}
\begin{aligned}
\bm {A_{1}}y_{n+1} &:=u_{n}+\frac{1}{\lambda}(2x_{n+1}-x_{n}-y_{n+1})\in A_{1}y_{n+1}, \\
\bm {A_{2}}x_{n+1} &:=\frac{1}{\gamma}(x_{n}-x_{n+1})-u_{n}-2Bx_{n}+Bx_{n-1}-Cx_{n}\in A_{2}x_{n+1}, \\
\bm {A_{1}}\bar x &:=\bar u\in A_{1}\bar x,\; \bm {A_{2}}\bar x := -\bar u -B\bar x\in A_{2}\bar x.
\end{aligned}
\end{equation}
By the monotonicity of $A_{1}$ and $A_{2}$, it follows that
\begin{equation}\label{inequality of the sequence}
\begin{aligned}
& \|(x_{n+1},u_{n+1})-(\bar x,\bar u)\|_{\mathcal{K}}^2 \\
& = \|(x_{n},u_{n})-(\bar x,\bar u)\|_{\mathcal{K}}^2 - \|(x_{n+1},u_{n+1})-(x_{n},u_{n})\|_{\mathcal{K}}^2 - 2\langle\bm{A_{2}}x_{n+1}-\bm{A_{2}}\bar x,x_{n+1}-\bar x\rangle \\
&\quad - 2\langle\bm {A_{1}}y_{n+1}-\bm {A_{1}}\bar x,y_{n+1}-\bar x\rangle - 2\langle B\bar x-Bx_{n},\bar x-x_{n+1}\rangle + 2\langle Bx_{n}-Bx_{n-1},x_{n}-x_{n+1}\rangle \\
& \quad   + 2\langle Bx_{n}-Bx_{n-1},\bar x-x_{n}\rangle - 2\langle Cx_{n}-C\bar x,x_{n+1}-\bar x\rangle\\
& \leq \|(x_{n},u_{n})-(\bar x,\bar u)\|_{\mathcal{K}}^2 - \|(x_{n+1},u_{n+1})-(x_{n},u_{n})\|_{\mathcal{K}}^2 - 2\langle B\bar x-Bx_{n},\bar x-x_{n+1}\rangle  \\
& \quad + 2\langle Bx_{n}-Bx_{n-1},x_{n}-x_{n+1}\rangle + 2\langle Bx_{n}-Bx_{n-1},\bar x-x_{n}\rangle - 2\langle Cx_{n}-C\bar x,x_{n+1}-\bar x\rangle.
\end{aligned}
\end{equation}
Since $B$ is monotone, we have
\begin{equation}\label{estimate-7}
\begin{aligned}
- 2\langle B\bar x-Bx_{n},\bar x-x_{n+1}\rangle &= - 2\langle B\bar x-Bx_{n+1},\bar x-x_{n+1}\rangle - 2\langle Bx_{n+1}-Bx_{n},\bar x-x_{n+1}\rangle \\
&\leq - 2\langle Bx_{n+1}-Bx_{n},\bar x-x_{n+1}\rangle.
\end{aligned}
\end{equation}
Using the Lipschitz continuity of $B$ yields
\begin{equation}\label{estimate-8}
2\langle Bx_{n}-Bx_{n-1},x_{n}-x_{n+1}\rangle \leq L(\|x_{n}-x_{n-1}\|^2 + \|x_{n+1}-x_{n}\|^2).
\end{equation}
In addition, from the cocoercivity of $C$,  for all $\varepsilon>0$, we derive that
\begin{equation}\label{estimate-9}
\begin{aligned}
& -2\langle Cx_{n}-C\bar x,x_{n+1}-\bar x\rangle \\
=& -2\langle Cx_{n}-C\bar x,x_{n}-\bar x\rangle - 2\varepsilon\langle \frac{1}{\varepsilon}(Cx_{n}-C\bar x),x_{n+1}-x_{n}\rangle \\
\leq & -2\beta\|Cx_{n}-C\bar x\|^2 + \frac{1}{\varepsilon}\|Cx_{n}-C\bar x\|^2 + \varepsilon\|x_{n+1}-x_{n}\|^2 - \varepsilon\|\frac{1}{\varepsilon}(Cx_{n}-C\bar x)-(x_{n+1}-x_{n})\|^2 \\
\leq &\; \varepsilon\|x_{n+1}-x_{n}\|^2 - (2\beta-\frac{1}{\varepsilon})\|Cx_{n}-C\bar x\|^2 .
\end{aligned}
\end{equation}
Substituting the estimates (\ref{estimate-7}), (\ref{estimate-8}) and (\ref{estimate-9}) into (\ref{inequality of the sequence}), it follows that
\begin{equation}\label{inequality-2 of the sequence}
\begin{aligned}
& \|(x_{n+1},u_{n+1})-(\bar x,\bar u)\|_{\mathcal{K}}^2 + 2\langle Bx_{n+1}-Bx_{n},\bar x-x_{n+1}\rangle + \frac{1}{2}\|(x_{n+1},u_{n+1})-(x_{n},u_{n})\|_{\mathcal{K}}^2\\
&\quad \leq \|(x_{n},u_{n})-(\bar x,\bar u)\|_{\mathcal{K}}^2 + 2\langle Bx_{n}-Bx_{n-1},\bar x-x_{n}\rangle + \frac{1}{2}\|(x_{n},u_{n})-(x_{n-1},u_{n-1})\|_{\mathcal{K}}^2  \\
&\qquad - \frac{1}{2}(\|(x_{n+1},u_{n+1})-(x_{n},u_{n})\|_{\mathcal{K}}^2+\|(x_{n},u_{n})-(x_{n-1},u_{n-1})\|_{\mathcal{K}}^2 )\\
&\qquad  + (L+ \varepsilon)(\|x_{n+1}-x_{n}\|^2 + \|x_{n}-x_{n-1}\|^2) - (2\beta-\frac{1}{\varepsilon})\|Cx_{n}-C\bar x\|^2.
\end{aligned}
\end{equation}
Note that Young's inequality implies
\begin{equation}\label{important inequality-2}
\begin{aligned}
0 \leq \frac{\lambda\gamma(L+\varepsilon)}{\lambda-\gamma}\left(\frac{1}{\lambda}\|x_{n+1}-x_{n}\|^2 - 2\langle x_{n+1}-x_{n},u_{n+1}-u_{n}\rangle + \lambda\|u_{n+1}-u_{n}\|^2\right),\\
0 \leq \frac{\lambda\gamma(L+\varepsilon)}{\lambda-\gamma}\left(\frac{1}{\lambda}\|x_{n}-x_{n-1}\|^2 - 2\langle x_{n}-x_{n-1},u_{n}-u_{n-1}\rangle + \lambda\|u_{n}-u_{n-1}\|^2\right).
\end{aligned}
\end{equation}
Summing (\ref{inequality-2 of the sequence}) and (\ref{important inequality-2}), and denoting
\begin{equation}
V_{n} :=\|(x_{n},u_{n})-(\bar x,\bar u)\|_{\mathcal{K}}^2 + 2\langle Bx_{n}-Bx_{n-1},\bar x-x_{n}\rangle + \frac{1}{2}\|(x_{n},u_{n})-(x_{n-1},u_{n-1})\|_{\mathcal{K}}^2,
\end{equation}
we obtain
\begin{equation}\label{inequality-3 of the sequence}
\begin{aligned}
& \quad V_{n+1} + \frac{\lambda-\gamma-2\lambda\gamma(L+\varepsilon)}{2(\lambda-\gamma)}(\|(x_{n+1},u_{n+1})-(x_{n},u_{n})\|_{\mathcal{K}}^2 \\
& + \|(x_{n},u_{n})-(x_{n-1},u_{n-1})\|_{\mathcal{K}}^2 ) + (2\beta-\frac{1}{\varepsilon})\|Cx_{n}-C\bar x\|^2 \leq V_{n}.
\end{aligned}
\end{equation}
To ensure that the sequence $\{V_n\}$ is nonincreasing, we have $\gamma<\frac{\lambda}{1+2\lambda(L+\varepsilon)}$ and $\varepsilon<\frac{1}{2\beta}$. Combining these two inequalities yields $\gamma<\frac{\lambda\beta}{\beta+\lambda(2\beta L+1)}$. In addition, (\ref{inequality-3 of the sequence}) gives
\begin{equation}\label{inequality-4 of the sequence}
V_{n+1} + \varepsilon'(\|(x_{n+1},u_{n+1})-(x_{n},u_{n})\|_{\mathcal{K}}^2+\|(x_{n},u_{n})-(x_{n-1},u_{n-1})\|_{\mathcal{K}}^2 ) \leq V_{n},
\end{equation}
which implies that
\begin{equation}\label{sum of energy function-3}
V_{n+1} + \varepsilon'\sum_{i=0}^{n}(\|(x_{i+1},u_{i+1})-(x_{i},u_{i})\|_{\mathcal{K}}^2+\|(x_{i},u_{i})-(x_{i-1},u_{i-1})\|_{\mathcal{K}}^2 ) \leq V_{0},
\end{equation}
where $\varepsilon'=\frac{\beta(\lambda-\gamma)-\lambda\gamma(2\beta L+1)}{2\beta(\lambda-\gamma)}$.

On the other hand, we need to estimate the lower bound of the sequence $\{V_{n+1}\}$. Note that the Lipschitz property of $B$ yields
\begin{equation}\label{the inequality-7}
2\langle Bx_{n}-Bx_{n-1},\bar x-x_{n}\rangle \leq L(\|x_{n}-x_{n-1}\|^2 + \|\bar x-x_{n}\|^2),
\end{equation}
and it follows from Young's inequality that
\begin{equation}\label{Young inequality}
\begin{aligned}
&\qquad \langle x_{n}-\bar x,u_{n}-\bar u\rangle \leq \frac{1}{2\lambda}\|x_{n}-\bar x\|^2 + \frac{\lambda}{2}\|u_{n}-\bar u\|^2,\\
& \langle x_{n}-x_{n-1},u_{n}-u_{n-1}\rangle \leq \frac{1}{2\lambda}\|x_{n}-x_{n-1}\|^2 + \frac{\lambda}{2}\|u_{n}- u_{n-1}\|^2.
\end{aligned}
\end{equation}
By combining (\ref{the inequality-7}), (\ref{Young inequality}) and $\gamma<\frac{\lambda}{2\lambda L+1}$, we deduce that
\begin{equation}\label{lower bound of Vn-3}
\begin{aligned}
V_{n+1} &\geq \frac{1}{2}\|(x_{n+1},u_{n+1})-(\bar x,\bar u)\|_{\mathcal{K}}^2 + \frac{1}{2\gamma}\|x_{n+1}-\bar x\|^2 - \langle x_{n+1}-\bar x,u_{n+1}-\bar u\rangle + \frac{\lambda}{2}\|u_{n+1}-\bar u\|^2 \\
& \quad + \frac{1}{2\gamma}\|x_{n+1}-x_{n}\|^2 - \langle x_{n+1}-x_{n},u_{n+1}-u_{n}\rangle + \frac{\lambda}{2}\|u_{n+1}-u_{n}\|^2 - L(\|x_{n+1}-x_{n}\|^2 + \|\bar x-x_{n+1}\|^2) \\
& \geq \frac{1}{2}\|(x_{n+1},u_{n+1})-(\bar x,\bar u)\|_{\mathcal{K}}^2 + (\frac{1}{2\gamma}-\frac{1}{2\lambda}-L)\|x_{n+1}-\bar x\|^2 + (\frac{1}{2\gamma}-\frac{1}{2\lambda}-L)\|x_{n+1}-x_{n}\|^2 \\
& \geq \frac{1}{2}\|(x_{n+1},u_{n+1})-(\bar x,\bar u)\|_{\mathcal{K}}^2 \geq 0.
\end{aligned}
\end{equation}
Hence, $\{V_{n}\}$ converges, together with (\ref{sum of energy function-3}) and (\ref{lower bound of Vn-3}), which implies $x_{n+1}-x_{n}\rightarrow 0$, $u_{n+1}-u_{n}\rightarrow 0$ and the sequence ($\{x_{n}\},\{u_{n}\}$) is bounded. From the identity $u_{n+1}=u_{n}+\frac{1}{\lambda}(2x_{n+1}-x_{n}-y_{n+1})$, we have $x_{n+1}-y_{n+1}\rightarrow 0$. Consequently, applying the above conclusions and the Lipschitz continuity of $B$, we obtain
\begin{equation}
 \lim_{n \rightarrow \infty} \|(x_{n},u_{n})-(\bar x,\bar u)\|_{\mathcal{K}}^2 = \lim_{n \rightarrow \infty} V_{n}
\end{equation}
exists. On the other hand, let ($\bar x,\bar u$) be a weak cluster point of a subsequence $(\{x_{n_k}\},\{u_{n_k}\})_{k\in N}$ of ($\{x_{n}\},\{u_{n}\}$). By the definition of resolvent operator, (\ref{Algorithm 3}) can be expressed as
\begin{equation}\label{inclusion of the sequence-3}
\begin{aligned}
& \dbinom{y_{n_{k+1}}-x_{n_{k+1}}}{\frac{1}{\lambda}(x_{n_{k+1}}-y_{n_{k+1}})} + (\frac{1}{\lambda}-\frac{1}{\gamma}) \dbinom{0}{x_{n_{k+1}}-x_{n_{k}}} + \dbinom{0}{Bx_{n_{k+1}}-Bx_{n_{k}}} - \dbinom{0}{By_{n_k}-By_{n_{k-1}}} \\
& \quad + \dbinom{0}{Cx_{n_{k+1}}-Cx_{n_{k}}}\in \left(\bm{A}+\bm{B}\right)\dbinom{u_{n_{k+1}}}{x_{n_{k+1}}},
\end{aligned}
\end{equation}
where $\bm{A}=\begin{bmatrix}
 A_{1}^{-1}     &  0      \\
0      &  A_{2}+B+C
\end{bmatrix}$ and $\bm{B}=\begin{bmatrix}
0      &  \textrm{-Id}      \\
\textrm{Id}      &  0
\end{bmatrix}$. Since $A_{1}^{-1}$ and $A_{2}+B+C$ are maximally monotone by Lemma \ref{lemma-maximally} and \ref{lemma2-maximally}, $\bm{A}+\bm{B}$ is also maximally monotone. Thus, its graph is closed in the weak-strong topology on $\mathcal{H}\times \mathcal{H}$. By taking the limit along the subsequence in (\ref{inclusion of the sequence-3}), it yields
\begin{equation}
0\in A_{1}^{-1}\bar u - \bar x, \quad 0\in (A_{2}+B+C)\bar x+\bar u.
\end{equation}
Hence, we deduce that $\bar x\in zer(A_1+A_2+B+C)$. According to Lemma \ref{opial-lemma}, it follows that ($\{x_{n}\},\{u_{n}\}$) converges weakly to ($\bar x,\bar u$), which implies that the conclusion of Theorem \ref{Theorem 3} holds.
\end{proof}

%%%%%%%%%%%%%%%%%%%%%%%%%%%%%%%%%%%%%%%%%%%%%%%%%%%%%%%%%%%%%%%%%%%%%%%%%%%%%%%%%%%%%%%%%%%%%%%%%%%%%%%%%%%%%%%%%%%%
\section{Applications}
In this section, we study the problem of finding a zero of the sum of $m$ maximally monotone operators, a monotone Lipschitz operator, and a cocoercive operator. In detail, we consider the following monotone inclusion problem.

\textbf{Problem 4.1} Let $H$ be a real Hilbert space, let $\mathcal{A}_{i}:H\rightarrow 2^{H}$ be maximally monotone, let $\mathcal{B}:H\rightarrow H$ be monotone and $L$-Lipschitz, and let $\mathcal{C}:H\rightarrow H$ be $\beta$-cocoercive. The problem is to
\begin{equation}\label{problem-m-maximally}
\textrm{find}\quad \pmb{x}\in H \quad\textrm{such that}\quad 0\in \sum_{i=1}^{m}\mathcal{A}_{i}\pmb{x}+\mathcal{B}\pmb{x}+\mathcal{C}\pmb{x},
\end{equation}
under the assumption that the set of solutions is nonempty.

Next, we will present the relationship between (\ref{problem-four-operator}) and (\ref{problem-m-maximally}). Let $({\omega}_{i})_{1\leq i\leq m}$ be real numbers in $(0,1]$ such that $\sum_{i=1}^{m}\omega_{i}=1$. Let $\mathbf{\mathcal{H}}=H^{m}$ be the Hilbert direct sum of $H$, and its scalar product and associated norm are defined as $\langle x|y \rangle =\sum_{i=1}^{m}\omega_{i}\langle \pmb{x}_{i} | \pmb{y}_{i} \rangle$ and $\|x\|=\sqrt{\sum_{i=1}^{m}\omega_{i}\left\|\pmb{x}_{i}\right\|^2}$, respectively, where $x=(\pmb{x}_{i})_{1\leq i\leq m}$ and $y=(\pmb{y}_{i})_{1\leq i\leq m}$ are elements of $\mathcal{H}$.

\begin{lemma}\label{techni-lemma}
Let $H$, $\mathcal{B}$, $\mathcal{C}$, $(\mathcal{A}_{i})_{1\leq i\leq m}$ be as in Problem \emph{4.1}, define
\begin{equation}\label{product-space 2}
\begin{aligned}
&V :=\{x=(\textbf{x}_{i})_{1\leq i \leq m}\in \mathcal{H}:\textbf{x}_{1}=\cdots=\textbf{x}_{m}\},\\
&j :H\rightarrow V \subset \mathcal{H}:\textbf{x}\mapsto (\textbf{x},\cdots,\textbf{x}),\\
&A :\mathcal{H}\rightarrow 2^{\mathcal{H}}:x\mapsto \frac{1}{\omega_{1}}\mathcal{A}_{1}\textbf{x}_{1}\times\cdots\times\frac{1}{\omega_{m}}\mathcal{A}_{m}\textbf{x}_{m},\\
&B :\mathcal{H}\rightarrow\mathcal{H}:x\mapsto (\mathcal{B}\textbf{x}_{1},\cdots,\mathcal{B}\textbf{x}_{m}),\\
&C:\mathcal{H}\rightarrow\mathcal{H}:x\mapsto (\mathcal{C}\textbf{x}_{1},\cdots,\mathcal{C}\textbf{x}_{m}).
\end{aligned}
\end{equation}
Then the following statements hold:

\emph{(i)} For any $x=(\textbf{x}_{i})_{1\leq i \leq m}\in \mathcal{H}$, $P_{V}x=j(\sum_{i=1}^{m}\omega_{i}\textbf{x}_{i})$.\\
\emph{(ii)} $N_{V}(x)=
  \left\{
\begin{aligned}
& V^{\bot}=\{x=(\textbf{x}_{i})_{1\leq i \leq m}\in \mathcal{H}:\begin{matrix}\sum_{i=1}^{m}\end{matrix}\omega_{i}\textbf{x}_{i}=0\}, \quad \textrm{ if}\;\pmb{x}\in V; \\
& \emptyset,\quad otherwise.
\end{aligned}
\right.$\\
\emph{(iii)} $A$ is maximally monotone, and for any $\gamma > 0$, $J_{\gamma A}:{(\textbf{x}_{i})_{1\leq i \leq m}\,\mapsto\, {(J_{\gamma\mathcal{A}_{i}/\omega_{i}}\textbf{x}_{i})}}$.\\
\emph{(iv)} $B$ is monotone and $L$-Lipschitz.\\
\emph{(v)} $C$ is $\beta$-cocoercive.\\
\emph{(vi)} For any $\textbf{x}\in H$, $\textbf{x}$ is a solution to Problem \emph{4.1} if and only if $j(\textbf{x})\in zer(A+B+C+N_{V})$.
\end{lemma}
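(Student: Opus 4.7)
The plan is to verify parts (i)--(vi) in order, each being a standard but distinct product-space calculation in the weighted Hilbert space $\mathcal{H}$.

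For (i), I would minimize $\|x - j(\mathbf{y})\|^2 = \sum_{i=1}^{m}\omega_{i}\|\mathbf{x}_i - \mathbf{y}\|^2$ over $\mathbf{y}\in H$. Setting the gradient to zero gives $\sum_i \omega_i (\mathbf{y}-\mathbf{x}_i)=0$, and since $\sum_i\omega_i=1$, the minimizer is $\mathbf{y}=\sum_i\omega_i\mathbf{x}_i$. For (ii), since $V$ is a closed linear subspace, $N_V(x)=V^{\perp}$ for $x\in V$ (empty otherwise). The characterization of $V^{\perp}$ is obtained by noting that $u\in V^{\perp}$ iff $\langle u,j(\mathbf{v})\rangle=\langle\sum_i\omega_i\mathbf{u}_i,\mathbf{v}\rangle=0$ for every $\mathbf{v}\in H$, equivalently $\sum_i\omega_i\mathbf{u}_i=0$.

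For (iii), each $\mathcal{A}_i/\omega_i$ is maximally monotone by Lemma \ref{lemma-maximally}(i) (with $\alpha=1/\omega_i>0$), and the product $A$ is maximally monotone by an $m$-fold application of Lemma \ref{lemma-maximally}(ii) (after checking that the weighted inner product on $\mathcal{H}$ yields exactly the same monotonicity inequalities as the unweighted product inner product, since positive rescalings of each factor preserve maximal monotonicity). The resolvent formula is obtained componentwise: $y=J_{\gamma A}x$ means $x\in y+\gamma Ay$, so $\mathbf{x}_i\in\mathbf{y}_i+(\gamma/\omega_i)\mathcal{A}_i\mathbf{y}_i$, i.e.\ $\mathbf{y}_i=J_{\gamma\mathcal{A}_i/\omega_i}\mathbf{x}_i$. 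For (iv) and (v), monotonicity, Lipschitz continuity, and cocoercivity of $B$ and $C$ all pass through the weighted sum $\sum_i\omega_i\langle\cdot,\cdot\rangle$ componentwise; for instance $\langle Bx-By,x-y\rangle=\sum_i\omega_i\langle\mathcal{B}\mathbf{x}_i-\mathcal{B}\mathbf{y}_i,\mathbf{x}_i-\mathbf{y}_i\rangle\ge 0$, and the Lipschitz constant is preserved since $\|Bx-By\|^2=\sum_i\omega_i\|\mathcal{B}\mathbf{x}_i-\mathcal{B}\mathbf{y}_i\|^2\le L^2\|x-y\|^2$. The cocoercivity of $C$ follows identically.

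The main substantive step is (vi), the equivalence between the original problem and the product-space inclusion. If $\mathbf{x}$ solves Problem 4.1, pick $\mathbf{a}_i\in\mathcal{A}_i\mathbf{x}$ with $\sum_i\mathbf{a}_i+\mathcal{B}\mathbf{x}+\mathcal{C}\mathbf{x}=0$, set $u:=(\mathbf{a}_i/\omega_i)_i\in Aj(\mathbf{x})$, and define $\mathbf{n}_i:=-\mathbf{a}_i/\omega_i-\mathcal{B}\mathbf{x}-\mathcal{C}\mathbf{x}$; then $u+Bj(\mathbf{x})+Cj(\mathbf{x})+n=0$ and $\sum_i\omega_i\mathbf{n}_i=-\sum_i\mathbf{a}_i-\mathcal{B}\mathbf{x}-\mathcal{C}\mathbf{x}=0$, so $n\in N_V(j(\mathbf{x}))$ by (ii). Conversely, from $j(\mathbf{x})\in\operatorname{zer}(A+B+C+N_V)$ one extracts $\mathbf{u}_i\in\mathcal{A}_i\mathbf{x}/\omega_i$ and $\mathbf{n}\in V^{\perp}$ with $\mathbf{u}_i+\mathcal{B}\mathbf{x}+\mathcal{C}\mathbf{x}+\mathbf{n}_i=0$; weighting by $\omega_i$ and summing over $i$ uses $\sum_i\omega_i\mathbf{n}_i=0$ to recover $\sum_i\omega_i\mathbf{u}_i+\mathcal{B}\mathbf{x}+\mathcal{C}\mathbf{x}=0$ with $\omega_i\mathbf{u}_i\in\mathcal{A}_i\mathbf{x}$, proving $\mathbf{x}$ solves Problem 4.1. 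I expect no serious obstacle; the only point requiring care is keeping track of the factors $\omega_i$ between the weighted inner product and the scaled operators $\mathcal{A}_i/\omega_i$, which must match so that $V^{\perp}$ correctly enforces the summation constraint.
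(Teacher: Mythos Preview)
Your proposal is correct and follows essentially the same approach as the paper: for (i)--(v) you spell out the elementary product-space computations that the paper handles by citing the corresponding propositions in Bauschke--Combettes, and for (vi) your two-direction argument is exactly the content of the paper's chain of equivalences (choosing $\mathbf{a}_i\in\mathcal{A}_i\mathbf{x}$, rescaling by $1/\omega_i$, and using the characterization of $V^\perp$ from (ii)). No gaps; only note that your justification of maximal monotonicity in (iii) via the weighted product space is slightly informal, but the statement is precisely what Proposition~23.18 of \cite{bauschkebook2017} (as cited in the paper) provides.
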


\begin{proof}
(i) This follows from Proposition 26.4 (iii) of \cite{bauschkebook2017}.\\
(ii) Combining Proposition 26.4 (i) and (ii) of \cite{bauschkebook2017}, we can obtain it.\\
(iii) See Proposition 23.18 of \cite{bauschkebook2017}.\\
(iv)\,\&\,(v) They follow from easy computations by combining (\ref{product-space 2}) and the properties of $\mathcal{B}$ and $\mathcal{C}$. \\
(vi) For all $\pmb{x}\in H$, we have
\begin{align}\label{product-space 1}
0\in \sum_{i=1}^{m}\mathcal{A}_{i}\pmb{x}+\mathcal{B}\pmb{x}+\mathcal{C}\pmb{x}
&\Leftrightarrow(\exists{(\pmb{y}_{i})_{1\leq i \leq m}}\in \times_{i=1}^{m}\mathcal{A}_{i}\pmb{x})\, 0=\sum_{i=1}^{m}\pmb{y}_{i}+\mathcal{B}\pmb{x}+\mathcal{C}\pmb{x}\nonumber\\
&\Leftrightarrow(\exists{(\pmb{y}_{i})_{1\leq i \leq m}}\in \times_{i=1}^{m}\mathcal{A}_{i}\pmb{x})\, 0=\sum_{i=1}^{m}\omega_{i}(-\pmb{y}_{i}/\omega_{i}-\mathcal{B}\pmb{x}-\mathcal{C}\pmb{x})\nonumber\\
&\Leftrightarrow(\exists{(\pmb{y}_{i})_{1\leq i \leq m}}\in \times_{i=1}^{m}\mathcal{A}_{i}\pmb{x})
\,(-\pmb{y}_{1}/\omega_{1},\cdots,-\pmb{y}_{m}/\omega_{m})-j(\mathcal{B}\pmb{x})-j(\mathcal{C}\pmb{x})\in V^{\bot}\nonumber\\
&\Leftrightarrow 0\in A(j(\pmb{x}))+B(j(\pmb{x}))+C(j(\pmb{x}))+N_{V}(j(\pmb{x}))\nonumber\\
&\Leftrightarrow j(\pmb{x})\in zer(A+B+C+N_{V}).
\end{align}
\end{proof}

Let $A_{1}=N_{V}$ and $A_{2}=A$ in (\ref{problem-four-operator}). Therefore, we can apply the proposed splitting algorithms in Section 3 to solve the problem (\ref{problem-m-maximally}).

\begin{theorem}\label{m-theorem-1}
  Consider the Problem \emph{4.1}. Let  $(\textbf{z}_{i,0})_{1\leq i\leq m},(\textbf{y}_{i,0})_{1\leq i\leq m},(\textbf{y}_{i,-1})_{1\leq i\leq m}\in H^m$ and set
\begin{equation}\label{m-maximally-algorithm1}
  \left\lfloor
\begin{aligned}
&\textbf{x}_{n+1}=\sum_{j=1}^{m}\omega_{j}\textbf{z}_{j,n}\\
&For\; i=1,\cdots,m\\
&\left\lfloor\begin{aligned}
&\textbf{y}_{i,n+1}=J_{\frac{\gamma}{\omega_i}\mathcal{A}_i}(2\textbf{x}_{n+1}-\textbf{z}_{i,n}-2\gamma\mathcal{B}\textbf{y}_{i,n}+\gamma\mathcal{B}\textbf{y}_{i,n-1}-\gamma\mathcal{C}\textbf{y}_{i,n})\\
&\textbf{z}_{i,n+1}=\textbf{z}_{i,n}+\textbf{y}_{i,n+1}-\textbf{x}_{n+1}
\end{aligned}
\right.
\end{aligned}
\right.
\end{equation}
where $\gamma \in \left(0,\frac{\beta}{2(1+4\beta L)}\right)$. Then $\{\pmb{x}_{n}\}$ converges weakly to a solution of Problem \emph{4.1}.
\end{theorem}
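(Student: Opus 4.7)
The plan is to lift Problem 4.1 into the product Hilbert space $\mathcal{H}=H^m$ as set up in Lemma \ref{techni-lemma}, and then apply Theorem \ref{Theorem 1} directly to the four operators $A_{1}=N_{V}$, $A_{2}=A$, $B$, $C$ defined in (\ref{product-space 2}). By Lemma \ref{techni-lemma}(iii)--(v), $N_V$ and $A$ are maximally monotone, $B$ is monotone and $L$-Lipschitz on $\mathcal{H}$, and $C$ is $\beta$-cocoercive on $\mathcal{H}$. By Lemma \ref{techni-lemma}(vi), the nonemptiness of the solution set of Problem 4.1 yields $\operatorname{zer}(A_{1}+A_{2}+B+C)\neq\emptyset$, so all hypotheses of Theorem \ref{Theorem 1} are satisfied for the stated range $\gamma\in\bigl(0,\beta/(2(1+4\beta L))\bigr)$.

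Next I would verify that the coordinate-wise scheme (\ref{m-maximally-algorithm1}) is exactly Algorithm (\ref{Algorithm 1}) applied to these product-space operators. The key identifications are: first, $J_{\gamma N_{V}}=P_{V}$, and by Lemma \ref{techni-lemma}(i) we have $P_{V}z_{n}=j\bigl(\sum_{i=1}^{m}\omega_{i}\textbf{z}_{i,n}\bigr)$, which produces the scalar update $\textbf{x}_{n+1}=\sum_{j=1}^{m}\omega_{j}\textbf{z}_{j,n}$ and identifies the $\mathcal{H}$-vector $x_{n+1}$ with $j(\textbf{x}_{n+1})$. Second, by Lemma \ref{techni-lemma}(iii), $J_{\gamma A}$ decouples into the coordinate resolvents $J_{(\gamma/\omega_{i})\mathcal{A}_{i}}$, so the second line of (\ref{Algorithm 1}) written in coordinates is exactly the $\textbf{y}_{i,n+1}$ update in (\ref{m-maximally-algorithm1}), using that $B$ and $C$ act coordinate-wise on $\mathcal{H}$. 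Third, the additive update $z_{n+1}=z_{n}+y_{n+1}-x_{n+1}$ trivially passes to coordinates and gives the $\textbf{z}_{i,n+1}$ update.

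With this identification in place, Theorem \ref{Theorem 1} yields weak convergence of the $\mathcal{H}$-valued iterate $\{x_{n}\}$ to some $\bar{x}\in\operatorname{zer}(A_{1}+A_{2}+B+C)$, and $\bar{x}=J_{\gamma A_{1}}(\bar{z})=P_{V}\bar{z}\in V$. Writing $\bar{x}=j(\bar{\textbf{x}})$, Lemma \ref{techni-lemma}(vi) implies that $\bar{\textbf{x}}$ solves Problem 4.1. Since $x_{n}=j(\textbf{x}_{n})$ for all $n\geq 1$ (its components already coincide by the first line of the algorithm), weak convergence of $x_{n}\rightharpoonup j(\bar{\textbf{x}})$ in $\mathcal{H}$ is equivalent to weak convergence $\textbf{x}_{n}\rightharpoonup \bar{\textbf{x}}$ in $H$, since the projection onto any single coordinate from the product space is weakly continuous.

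The only nontrivial step is the bookkeeping at the start: verifying that (\ref{m-maximally-algorithm1}) genuinely coincides with (\ref{Algorithm 1}) once one unpacks the direct-sum inner product with weights $\omega_{i}$, checks that the scaled resolvent $J_{\gamma A}$ produces the factors $\gamma/\omega_{i}$ correctly, and confirms that the Lipschitz and cocoercivity constants transfer without degradation under the weighted norm. I do not expect any real obstacle beyond that check, as the rest is a direct application of the already-proved Theorem \ref{Theorem 1}.
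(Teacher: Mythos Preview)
Your proposal is correct and follows essentially the same route as the paper: lift to the product space via Lemma~\ref{techni-lemma}, set $A_{1}=N_{V}$ and $A_{2}=A$, identify (\ref{m-maximally-algorithm1}) with (\ref{Algorithm 1}) componentwise, and invoke Theorem~\ref{Theorem 1}. Your write-up is in fact more careful than the paper's, which simply asserts that the conclusion ``follows directly'' without spelling out the passage from weak convergence in $\mathcal{H}$ back to weak convergence in $H$.
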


\begin{proof}
 Let $x_{n}=j(\pmb{x}_{n})$, $y_{n}=(\pmb{y}_{i,n})_{1\leq i \leq m}$ and $z_{n}=(\pmb{z}_{i,n})_{1\leq i \leq m}$. By Lemma \ref{techni-lemma} (i), $P_{V}z_{n}=j(\sum_{i=1}^{m}\omega_{i}\pmb{z}_{i,n})$ . Hence, it follows from  Lemma \ref{techni-lemma} that (\ref{m-maximally-algorithm1}) can be written as
 \begin{equation}\label{Algorithm 1-}
\left\{
\begin{aligned}
& x_{n+1}=P_{V}z_{n}\\
& y_{n+1}=J_{\gamma A}(2x_{n+1}-z_{n}-2\gamma By_{n}+\gamma By_{n-1}-\gamma Cy_{n})\\
& z_{n+1}=z_{n}+y_{n+1}-x_{n+1}
\end{aligned}
\right.
\end{equation}
Therefore, the conclusions of Theorem \ref{m-theorem-1} follows directly from Theorem \ref{Theorem 1}.
\end{proof}

\begin{theorem}\label{m-theorem-2}
  Consider the Problem \emph{4.1}. Let  $(\textbf{z}_{i,0})_{1\leq i\leq m},(\textbf{y}_{i,0})_{1\leq i\leq m},(\textbf{y}_{i,-1})_{1\leq i\leq m}\in H^m$ and set
\begin{equation}\label{m-maximally-algorithm2}
  \left\lfloor
\begin{aligned}
&\textbf{x}_{n+1}=\sum_{j=1}^{m}\omega_{j}\textbf{z}_{j,n}\\
&For\; i=1,\cdots,m\\
&\left\lfloor\begin{aligned}
&\textbf{y}_{i,n+1}=J_{\frac{\gamma}{\omega_i}\mathcal{A}_i}(2\textbf{x}_{n+1}-\textbf{z}_{i,n}-\gamma\mathcal{B}(2
\textbf{y}_{i,n}-\textbf{y}_{i,n-1})-\gamma\mathcal{C}\textbf{y}_{i,n})\\
&\textbf{z}_{i,n+1}=\textbf{z}_{i,n}+\textbf{y}_{i,n+1}-\textbf{x}_{
n+1}
\end{aligned}
\right.
\end{aligned}
\right.
\end{equation}
where $\gamma \in \left(0,\frac{\beta}{5+(10+\frac{6}{a})\beta L}\right)$, where $a=\frac{17\beta L+10+\sqrt{(17\beta L+10)^2+144\beta^2 L^2}}{6\beta L}$. Then $\{\pmb{x}_{n}\}$ converges weakly to a solution of Problem \emph{4.1}.
\end{theorem}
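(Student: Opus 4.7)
The plan is to mirror the strategy used in the proof of Theorem \ref{m-theorem-1}: lift Problem 4.1 into the product space $\mathcal{H} = H^m$ via the constructions in Lemma \ref{techni-lemma}, identify the lifted iteration with Algorithm \emph{(\ref{Algorithm 2})} applied to the four operators $(N_V, A, B, C)$, and then invoke Theorem \ref{Theorem 2} to obtain weak convergence.

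First I would set $x_n = j(\pmb{x}_n) \in V$, $y_n = (\pmb{y}_{i,n})_{1 \leq i \leq m}$ and $z_n = (\pmb{z}_{i,n})_{1 \leq i \leq m}$ as elements of $\mathcal{H}$. Taking $A_1 = N_V$ and $A_2 = A$ in (\ref{problem-four-operator}), Lemma \ref{techni-lemma}(vi) guarantees that the zeros of $N_V + A + B + C$ projected via $j^{-1}$ are exactly the solutions of Problem 4.1, so the assumption that the solution set is nonempty transfers to the product-space inclusion. Next, using Lemma \ref{techni-lemma}(i) together with the identity $J_{\gamma N_V} = P_V$, the first line of (\ref{m-maximally-algorithm2}) rewrites as $x_{n+1} = P_V z_n = J_{\gamma N_V} z_n$. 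Using Lemma \ref{techni-lemma}(iii), the componentwise second line collapses to the single resolvent evaluation $y_{n+1} = J_{\gamma A}(2 x_{n+1} - z_n - \gamma B(2 y_n - y_{n-1}) - \gamma C y_n)$ in $\mathcal{H}$, since $J_{\gamma A}$ acts separately on each coordinate with stepsize $\gamma/\omega_i$. The third line is obviously coordinatewise the same as $z_{n+1} = z_n + y_{n+1} - x_{n+1}$.

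Thus the lifted iteration is precisely Algorithm \emph{(\ref{Algorithm 2})} applied to the operators $(N_V, A, B, C)$ on $\mathcal{H}$. By Lemma \ref{techni-lemma}(iii)--(v), $A$ is maximally monotone, $B$ is monotone and $L$-Lipschitz, and $C$ is $\beta$-cocoercive on $\mathcal{H}$; the normal cone $N_V$ is maximally monotone since $V$ is a closed vector subspace. The stepsize range $\gamma \in \left(0, \tfrac{\beta}{5 + (10 + 6/a)\beta L}\right)$ with the stated value of $a$ is exactly the admissible range from Theorem \ref{Theorem 2}, so that result applies and yields weak convergence of $\{x_n\}$ in $\mathcal{H}$ to some $\bar{x} = J_{\gamma N_V} \bar{z} = P_V \bar{z} \in \mathrm{zer}(N_V + A + B + C)$.

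Finally I would pull the conclusion back to $H$: since $x_n = j(\pmb{x}_n) \in V$ and $V$ is weakly closed, the weak limit $\bar{x}$ lies in $V$, so $\bar{x} = j(\pmb{\bar x})$ for a unique $\pmb{\bar x} \in H$, and weak convergence of $x_n \rightharpoonup \bar{x}$ in $\mathcal{H}$ is equivalent (via the linear isometry $j$ onto $V$) to weak convergence $\pmb{x}_n \rightharpoonup \pmb{\bar x}$ in $H$. By Lemma \ref{techni-lemma}(vi), $\pmb{\bar x}$ solves Problem 4.1. There is no real obstacle here beyond unpacking definitions carefully, because all the analytic work has already been done in Theorem \ref{Theorem 2}; the only subtlety worth double-checking is that the componentwise resolvent $J_{\gamma A}$ really does produce the scalings $\gamma/\omega_i$ appearing in (\ref{m-maximally-algorithm2}), which is exactly the content of Lemma \ref{techni-lemma}(iii).
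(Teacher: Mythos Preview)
Your proposal is correct and follows essentially the same approach as the paper's own proof: lift to the product space via Lemma \ref{techni-lemma}, identify the iteration with Algorithm (\ref{Algorithm 2}) applied to $(N_V,A,B,C)$, and invoke Theorem \ref{Theorem 2}. Your write-up is in fact more detailed than the paper's, which simply states the identification and cites Theorem \ref{Theorem 2} without spelling out the pull-back to $H$.
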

\begin{proof}
 Let $x_{n}=j(\pmb{x}_{n})$, $y_{n}=(\pmb{y}_{i,n})_{1\leq i \leq m}$, and $z_{n}=(\pmb{z}_{i,n})_{1\leq i \leq m}$. By Lemma \ref{techni-lemma} (i), $P_{V}z_{n}=j(\sum_{i=1}^{m}\omega_{i}\pmb{z}_{i,n})$ .Hence, it follows from  Lemma \ref{techni-lemma} that (\ref{m-maximally-algorithm2}) can be written as
 \begin{equation}\label{Algorithm 2-}
\left\{
\begin{aligned}
& x_{n+1}=P_{V}z_{n}\\
& y_{n+1}=J_{\gamma A}(2x_{n+1}-z_{n}-\gamma B(2y_{n}-y_{n-1})-\gamma Cy_{n})\\
& z_{n+1}=z_{n}+y_{n+1}-x_{n+1}
\end{aligned}
\right.
\end{equation}
Therefore, the conclusions of Theorem \ref{m-theorem-2} follows directly from Theorem \ref{Theorem 2}.
\end{proof}

\begin{theorem}\label{m-theorem-3}
  Consider the Problem \emph{4.1}. Let  $\textbf{x}_{0},\textbf{x}_{-1}\in H, (\textbf{u}_{i,0})_{1\leq i\leq m}\in H^m$ and set
\begin{equation}\label{m-maximally-algorithm3}
  \left\lfloor
\begin{aligned}
&\textbf{x}_{n+1}=\sum_{j=1}^{m}\omega_{j}(\textbf{x}_{n}-\gamma \textbf{u}_{j,n}-\gamma(2\mathcal{B}\textbf{x}_{n}-\mathcal{B}\textbf{x}_{n-1})-\gamma\mathcal{C}\textbf{x}_{n})\\
&For\; i=1,\cdots,m\\
&\left\lfloor\begin{aligned}
&\textbf{y}_{i,n+1}=J_{\frac{\lambda}{\omega_i}\mathcal{A}_i}(2\textbf{x}_{n+1}-\textbf{x}_{n}+\lambda \textbf{u}_{i,n})\\
&\textbf{u}_{i,n+1}=\textbf{u}_{i,n}+\frac{1}{\lambda}(2\textbf{x}_{n+1}-\textbf{x}_{n}-\textbf{y}_{i,n+1})
\end{aligned}
\right.
\end{aligned}
\right.
\end{equation}
where $\lambda >0$ and $\gamma \in \left(0,\frac{\lambda\beta}{\beta+\lambda(2\beta L+1)}\right)$. Then $\{\pmb{x}_{n}\}$ converges weakly to a solution of Problem \emph{4.1}.
\end{theorem}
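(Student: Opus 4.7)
The plan is to reduce Theorem \ref{m-theorem-3} to Theorem \ref{Theorem 3} via the product-space reformulation of Lemma \ref{techni-lemma}, exactly as the proofs of Theorems \ref{m-theorem-1} and \ref{m-theorem-2} reduce to Theorems \ref{Theorem 1} and \ref{Theorem 2}. In the four-operator template of Algorithm (\ref{Algorithm 3}) I would set $A_{1}:=A$ and $A_{2}:=N_{V}$, with $B$ and $C$ the liftings introduced in (\ref{product-space 2}). By Lemma \ref{techni-lemma}(iii)--(v) together with the standard fact that the normal cone of a closed subspace is maximally monotone, the hypotheses of Theorem \ref{Theorem 3} are satisfied in $\mathcal{H}$, and Lemma \ref{techni-lemma}(vi) gives the bijection between $\mathrm{zer}(A+N_{V}+B+C)$ and the solution set of Problem 4.1 through $j$.

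Next I would set $x_{n}:=j(\pmb{x}_{n})$, $y_{n}:=(\pmb{y}_{i,n})_{1\leq i\leq m}$ and $u_{n}:=(\pmb{u}_{i,n})_{1\leq i\leq m}$ and verify that (\ref{m-maximally-algorithm3}) is precisely (\ref{Algorithm 3}) written out in $\mathcal{H}$ for this choice of operators. The key identifications are: $J_{\gamma N_{V}}=P_{V}$, so by Lemma \ref{techni-lemma}(i) and $\sum_{j}\omega_{j}=1$,
\begin{equation*}
P_{V}\bigl(x_{n}-\gamma u_{n}-\gamma(2Bx_{n}-Bx_{n-1})-\gamma Cx_{n}\bigr)=j\!\left(\sum_{j=1}^{m}\omega_{j}\bigl(\pmb{x}_{n}-\gamma\pmb{u}_{j,n}-\gamma(2\mathcal{B}\pmb{x}_{n}-\mathcal{B}\pmb{x}_{n-1})-\gamma\mathcal{C}\pmb{x}_{n}\bigr)\right)=j(\pmb{x}_{n+1}),
\end{equation*}
while the resolvent $J_{\lambda A}$ decouples componentwise into $J_{\lambda\mathcal{A}_{i}/\omega_{i}}$ by Lemma \ref{techni-lemma}(iii). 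Since $x_{n+1}$ and $x_{n}$ lie in $V$, the $i$-th slot of $2x_{n+1}-x_{n}+\lambda u_{n}$ is $2\pmb{x}_{n+1}-\pmb{x}_{n}+\lambda\pmb{u}_{i,n}$, which reproduces the second line of (\ref{m-maximally-algorithm3}); the third line is then immediate from the componentwise form of $u_{n}$. Because $L$ and $\beta$ are preserved under the lifting (Lemma \ref{techni-lemma}(iv)--(v)), the bound $\gamma\in(0,\lambda\beta/(\beta+\lambda(2\beta L+1)))$ assumed here is exactly the stepsize condition of Theorem \ref{Theorem 3}.

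Applying Theorem \ref{Theorem 3} then yields weak convergence of $\{x_{n}\}=\{j(\pmb{x}_{n})\}$ to some $\bar{x}\in\mathrm{zer}(A+N_{V}+B+C)$. Because $j(\pmb{x}_{n})\in V$ for every $n$ and $V$ is weakly closed, $\bar{x}\in V$, so $\bar{x}=j(\pmb{\bar{x}})$ for some $\pmb{\bar{x}}\in H$; the relation $\langle j(\pmb{x}_{n}),j(\pmb{y})\rangle_{\mathcal{H}}=\langle\pmb{x}_{n},\pmb{y}\rangle$ then upgrades this to weak convergence $\pmb{x}_{n}\rightharpoonup\pmb{\bar{x}}$ in $H$, and Lemma \ref{techni-lemma}(vi) identifies $\pmb{\bar{x}}$ as a solution of Problem 4.1.

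The proof is essentially bookkeeping and I do not anticipate a real obstacle. The only subtlety worth flagging is that the first line of (\ref{m-maximally-algorithm3}), which looks superficially like a weighted average of candidate updates, is in fact the $P_{V}$-projection of the full product-space vector appearing in Algorithm (\ref{Algorithm 3}); this hinges on $\sum_{j}\omega_{j}=1$ together with the fact that $j(\pmb{x}_{n})$ is constant across coordinates, so that $Bx_{n}$, $Bx_{n-1}$ and $Cx_{n}$ each produce the same value in every slot and the weighted sum collapses correctly onto the diagonal.
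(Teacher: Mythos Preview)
Your proposal is correct and follows essentially the same product-space reduction as the paper's own proof: set $x_{n}=j(\pmb{x}_{n})$, $y_{n}=(\pmb{y}_{i,n})_{1\leq i\leq m}$, $u_{n}=(\pmb{u}_{i,n})_{1\leq i\leq m}$, identify (\ref{m-maximally-algorithm3}) with Algorithm (\ref{Algorithm 3}) for $A_{1}=A$, $A_{2}=N_{V}$ via Lemma \ref{techni-lemma}, and invoke Theorem \ref{Theorem 3}. Your final paragraph on transferring weak convergence from $\mathcal{H}$ back down to $H$ through $j$ is a detail the paper leaves implicit.
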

\begin{proof}
 Let $x_{n}=j(\pmb{x}_{n})$, $y_{n}=(\pmb{y}_{i,n})_{1\leq i \leq m}$, and $u_{n}=(\pmb{u}_{i,n})_{1\leq i \leq m}$. By Lemma \ref{techni-lemma} (i), $P_{V}(x_{n}-\gamma u_{n}-\gamma(2Bx_{n}-Bx_{n-1})-\gamma Cx_{n})=j(\sum_{i=1}^{m}\omega_{i}(\pmb{x}_{n}-\gamma \pmb{u}_{i,n}-\gamma(2\mathcal{B}\pmb{x}_{n}-\mathcal{B}\pmb{x}_{n-1})-\gamma\mathcal{C}\pmb{x}_{n}))$. Hence, it follows from  Lemma \ref{techni-lemma} that (\ref{m-maximally-algorithm3}) can be written as
 \begin{equation}\label{Algorithm 3-}
\left\{
\begin{aligned}
& x_{n+1}=P_{V}(x_{n}-\gamma u_{n}-\gamma(2Bx_{n}-Bx_{n-1})-\gamma Cx_{n})\\
& y_{n+1}=J_{\lambda A}(2x_{n+1}-x_{n}+\lambda u_{n})\\
& u_{n+1}=u_{n}+\frac{1}{\lambda}(2x_{n+1}-x_{n}-y_{n+1})
\end{aligned}
\right.
\end{equation}
Therefore, the conclusions of Theorem \ref{m-theorem-3} follows directly from Theorem \ref{Theorem 3}.
\end{proof}

%%%%%%%%%%%%%%%%%%%%%%%%%%%%%%%%%%%%%%%%%%%%%%%%%%%%%%%%%%%%%%%%%%%%%%%%%%%%%%%%%%%%%%%%%%%%%%%%%%%%%%%%%%%%%%%%
\section{Numerical experiments}
In this section, we perform numerical experiments to illustrate the performance of the proposed algorithms. All experiments are conducted on a Lenovo Laptop with an AMD Ryzen 5 2500U CPU (2.00 GHZ) and 8.00 GB RAM.

Suppose that $M_{1},\cdots,M_{k}$ are nonempty, closed and convex sets, the Minkowski sum is defined as follows:
\begin{equation}
M_{1}+\cdots+M_{k}=\{m_{1}+\cdots+m_{k}|m_{1}\in M_{1},\cdots,m_{k}\in M_{k}\}.
\end{equation}
We can show that the Minkowski sum of convex sets is convex by simple calculation. The problem of finding the projection of a point $f\in \mathcal{H}$ onto the Minkowski sum $M_{1}+\cdots+M_{k}$ is defined by
\begin{equation}\label{minkowski sum projection problem}
\min\{\|f-x\|\;|x\in M_{1}+\cdots+M_{k} \}.
\end{equation}
Let $x$ be a solution of (\ref{minkowski sum projection problem}), according to Lemma 4.3.1 of \cite{Banert2012}, we have
\begin{equation}\label{minkowski sum problem transform}
0\in (x-f)+(\partial \delta_{M_{1}}\Box\cdots\Box\partial \delta_{M_{k}})(x),
\end{equation}
where $\partial \delta_{M_{1}}\Box\cdots\Box\partial \delta_{M_{k}}$ denotes the infimal convolution of the indictor functions $\delta_{M_{1}},\cdots,\delta_{M_{k}}$. Let $y\in (\partial \delta_{M_{1}}\Box\cdots\Box\partial \delta_{M_{k}})(x)$, and define
\begin{equation}
\begin{aligned}
\mathcal{C}: (x,y) & \mapsto (x-f,0), \\
\mathcal{B}: (x,y) & \mapsto (y,-x), \\
\mathcal{A}_{i}: (x,y) & \mapsto (0, (\partial \delta_{M_{i}})^{-1}y ), i = 1, \cdots, k. \\
\end{aligned}
\end{equation}
Hence, $x$ satisfies (\ref{minkowski sum problem transform}) is equivalent to $(x,y)\in zer(\sum_{i=1}^{k}\mathcal{A}_{i} + \mathcal{B} + \mathcal{C})$. In particular, for each $i=1, \cdots, k$, $\mathcal{A}_{i}$ is maximally monotone, $\mathcal{B}$ is 1-Lipschitz continuous, and $\mathcal{C}$ is 1-cocoercive. Therefore, we can employ the proposed algorithms in Section 4 to solve (\ref{minkowski sum problem transform}). Due to $B$ is linear, the proposed algorithms (\ref{m-maximally-algorithm1}) and (\ref{m-maximally-algorithm2}) are equivalent.

In the following, we illustrate the performance of the proposed algorithms (\ref{m-maximally-algorithm1}) and (\ref{m-maximally-algorithm3}).

\textbf{Example 5.1}(\cite{Banert2012}) Let $\mathcal{H}=R^2$ and consider the problem (\ref{minkowski sum projection problem}) with the sets $M_{1}=[-2,2]\times\{0\}$, $M_{2}=\{0\}\times[-1,1]$ and $M_{3}=\{(x,y)\in R^2|\|(x,y)\|\leq 1\}$.

Notice that the projection on the single sets are as follows:
\begin{equation}
P_{M_{1}}(x,y)=
\left\{
\begin{aligned}
&(-2,0) \quad \textrm{if}\;x<-2,\\
&(x,0) \quad \;\textrm{if}\;-2\leq x \leq 2,\\
&(2,0) \quad \;\textrm{if}\;x>2,\\
\end{aligned}
\right.
\end{equation}
\begin{equation}
P_{M_{2}}(x,y)=
\left\{
\begin{aligned}
&(0,-1) \quad \textrm{if}\;y<-1,\\
&(0,y) \quad \;\textrm{if}\;-1\leq y \leq 1,\\
&(0,1) \quad \;\textrm{if}\;y>1,\\
\end{aligned}
\right.
\end{equation}
\begin{equation}
P_{M_{3}}(x,y)=
\left\{
\begin{aligned}
&(x,y) \qquad\qquad\quad \textrm{if}\;\|(x,y)\|\leq 1,\\
&\frac{1}{\|(x,y)\|}(x,y)\quad \textrm{if}\; \|(x,y)\|>1.
\end{aligned}
\right.
\end{equation}

Let $f=(1,-4)$, $f=(2,7)$, and $f=(6,-4)$, then  the projection of $f$ is $\bar{x}=(1,-2)$,
$\bar{x}=(2,2)$, and $\bar{x}=(2.8,-1.6)$, respectively.
We terminate the algorithm when $\|x_{n}-\bar{x}\|\leq \varepsilon$, where $\varepsilon = 10^{-6}$.
We use ``Iter" to denote the iteration numbers and "Time(s)" to denote the elapsed CPU time (in seconds).
The obtained results are reported in Table \ref{Table-2}. It can be seen from Table \ref{Table-2} that the larger the iteration parameter,
the faster the proposed iterative algorithms (\ref{m-maximally-algorithm1}) and (\ref{m-maximally-algorithm3}) converge.
For the iterative algorithm (\ref{m-maximally-algorithm3}), when the parameter $\lambda$ increases, although the range of the parameter $\gamma$ increases,
the number of iteration numbers increases instead. For the choice of $\lambda = 0.5$,
it can be seen from the results that the algorithm (\ref{m-maximally-algorithm3}) is better than the algorithm (\ref{m-maximally-algorithm1}).

\begin{table}[ht]
\footnotesize
\centering
\caption{Numerical results of the proposed algorithms.}
\begin{tabular}{c|c|c|cccccccc}
\hline
\multirow{2}[1]{*}{The proposed algorithms} & \multirow{2}[1]{*}{$\lambda$} & \multirow{2}[1]{*}{$\gamma$}  &  \multicolumn{2}{c}{$f=(6,-4)$} && \multicolumn{2}{c}{$f=(1,-4)$} && \multicolumn{2}{c}{$f=(2,7)$}  \\
\cline{4-11}
 & & & Iter & Time(s) && Iter & Time(s) && Iter & Time(s)\\
\hline
\multirow{5}[1]{*}{(\ref{m-maximally-algorithm1})} & \multirow{5}[1]{*}{-}
&  $0.02$ & $941$  & $0.71$&& $946$ & $0.73$&& $1110$ & $0.72$\\
& & $0.04$ & $564$  & $0.71$&& $566$ & $0.73$&& $558$ & $0.71$ \\
& & $0.06$ & $378$  & $0.72$&& $379$ & $0.87$&& $374$ & $0.72$ \\
& & $0.08$ & $285$  & $0.74$&& $240$ & $0.77$&& $282$ & $0.72$\\
& & $0.1$ & $229$  & $0.72$&& $193$ & $0.74$&& $226$ & $0.73$ \\
\hline
\multirow{16}[1]{*}{(\ref{m-maximally-algorithm3})} & \multirow{4}[1]{*}{$0.5$}  &  $0.05$ & $457$ & $0.90$ && $456$ & $0.93$&& $457$ & $0.88$ \\
& & $0.1$ & $250$  & $ 0.90$&&  $250$ &0.92&&   $250$ & $0.89$\\
& & $0.15$ & $180$   & $0.91$&&   $149$ & $0.91$&& $179$ & $0.90$ \\
& & $0.2$ & $143$   & $0.90$&&   $142$ & $0.90$& &$166$ & $0.88$ \\
\cline{2-11}
& \multirow{6}[1]{*}{$2$}  &  $0.05$ & $718$ & $0.93$ && $889$ & $0.91$&& $1306$ & $0.91$ \\
& & $0.1$ & $501$  & $ 0.90$&&  $446$ &0.90&&   $592$ & $0.90$\\
& & $0.15$ & $317$   & $0.93$&&   $360$ & $0.91$&& $383$ & $0.89$ \\
& & $0.2$ & $189$   & $0.91$&&   $276$ & $0.92$& &$293$ & $0.92$ \\
& & $0.25$ & $226$   & $0.88$&&   $228$ & $0.90$& &$250$ & $0.89$ \\
& & $0.28$ & $208$   & $0.91$&&   $213$ & $0.92$& &$226$ & $0.91$ \\
\cline{2-11}
& \multirow{6}[1]{*}{$5$}  &  $0.05$ & $1759$ & $0.92$ && $1691$ & $0.91$&& $1756$ & $0.91$ \\
& & $0.1$ & $1209$  & $ 0.89$&&  $946$ &0.90&&   $914$ & $0.91$\\
& & $0.15$ & $738$   & $0.89$&&   $806$ & $0.91$&& $797$ & $0.91$ \\
& & $0.2$ & $678$   & $0.89$&&   $679$ & $0.89$& &$670$ & $0.89$ \\
& & $0.25$ & $581$   & $0.92$&&   $547$ & $0.90$& &$621$ & $0.91$ \\
& & $0.31$ & $481$   & $0.89$&&   $510$ & $0.93$& &$531$ & $0.89$ \\
\hline
\end{tabular}\label{Table-2}
\end{table}

%%%%%%%%%%%%%%%%%%%%%%%%%%%%%%%%%%%%%%%%%%%%%%%%%%%%%%%%%%%%%%%%%%%%%%%%%%%%%%%%%%%%%%%%%%%%%%%%%%%%%%%%%%%%%%%%%%%%%%%%%%%%%%%%%%55
\section{Conclusions}
In this paper, we considered the monotone inclusions with a sum of four operators, in which two of them are maximally monotone, one is monotone Lipschitz, and one is cocoercive.
We introduced three new splitting algorithms to solve it and analyzed their convergence.
As applications, we considered composite monotone inclusion problems. To solve this monotone inclusion, we transformed it into the formulation of (\ref{product-space 1}) by the technique of product space.
We evaluated the performance of the proposed algorithms on the Projection on the Minkowski sums of convex sets problem (\ref{minkowski sum projection problem}). Numerical experiments demonstrated the effectiveness and efficiency of the proposed algorithms.

%%%%%%%%%%%%%%%%%%%%%%%%%%%%%%%%%%%%%%%%%%%%%%%%%%%%%%%%%%%%%%%%%%%%%%%%%%%%%%%%%%%%%%%%%%%%%%%%%%%%%%%%%%%%%%%%55
\section*{Funding}

This work was funded by the National Natural Science Foundations of China (12061045, 11661056).

%\bibliographystyle{unsrt}
%\bibliography{klreference-en}

\end{document}